\renewcommand{\u}{\vec{u}}
\newcommand{\x}{\vec{x}}
\newcommand{\z}{\vec{z}}
\newcommand{\V}{\vec{V}}
\newcommand{\w}{\vec{w}}
\renewcommand{\v}{\vec{v}}
\newcommand{\g}{\vec{g}}
\newcommand{\f}{\vec{f}}
\newcommand{\N}{\mathbb{N}}
\newcommand{\X}{\vec{X}}
\newcommand{\Th}{\mathcal{T}_h}
\newcommand{\Om}{\Omega}
\DeclarePairedDelimiter{\norm}{\lVert}{\rVert}
\DeclarePairedDelimiter{\twonorm}{\lVert}{\rVert_{L^2(\Omega)}}
\DeclarePairedDelimiter{\stwonorm}{\lVert}{\rVert^2_{L^2(\Omega)}}
\newcommand{\na}{\nabla}
\newcommand{\pa}{\partial}
\newcommand{\De}{\Delta}
\newcommand{\IOm}{I\times \Om}
\newcommand{\ImOm}{I_m\times \Om}
\newcommand{\vertiii}[1]{{\left\vert\kern-0.25ex\left\vert\kern-0.25ex\left\vert #1
    \right\vert\kern-0.25ex\right\vert\kern-0.25ex\right\vert}}
\newcommand{\Ppol}[1]{\mathcal{P}_{#1}}
\newtheorem{remark}[theorem]{Remark}
\begin{document}

\title{$L^2(I;H^1(\Omega))$ and $L^2(I;L^2(\Omega))$ best approximation type error estimates for Galerkin solutions of transient Stokes problems}

\author{
Dmitriy Leykekhman\footnotemark[2]
\and
Boris Vexler\footnotemark[3]
}

\pagestyle{myheadings}
\markboth{DMITRIY LEYKEKHMAN AND BORIS VEXLER}{Discrete maximal parabolic regularity}

\maketitle

\renewcommand{\thefootnote}{\fnsymbol{footnote}}
\footnotetext[2]{Department of Mathematics,
               University of Connecticut,
              Storrs,
              CT~06269, USA (dmitriy.leykekhman@uconn.edu). The author was partially supported by NSF grant DMS-1115288. }

\footnotetext[3]{Technical University of Munich,
Chair of Optimal Control,
Center for Mathematical Sciences,
Boltzmannstra{\ss}e 3,
85748 Garching by Munich, Germany, (vexler@ma.tum.de). }

\renewcommand{\thefootnote}{\arabic{footnote}}

%\subjclass{65N30,65N15}
%\date{January 11, 2017}

\begin{abstract}
In this paper we establish best approximation type estimates for the fully discrete Galerkin solutions of transient Stokes problem in $L^2(I;L^2(\Omega)^d)$ and $L^2(I;H^1(\Omega)^d)$  norms. These estimates fill the gap in the error analysis of the transient Stokes problems and have a number of applications.  The analysis naturally extends to inhomogeneous parabolic problems.  The best type $L^2(I;H^1(\Omega))$ error estimates seems to be new even for  scalar parabolic problems. 
\end{abstract}

\begin{keywords}
Stokes problem, transient Stokes, parabolic problems, finite elements, discontinuous Galerkin, error estimates, best approximation, fully discrete
\end{keywords}

\begin{AMS}
65N30,65N15
\end{AMS}

%%%%%%%%%%%%%%%%%%%%%%%%%%%%%%%%%%%%%%%%%%%%%%%%%%%%%%%%%%%%%%%%
\section{Introduction}
In this paper we consider the following transient Stokes problem with no-slip boundary conditions, 
\begin{equation}\label{eq:transient:Stokes}
    \begin{aligned}
	\partial_t\u-\Delta \u+ \nabla p &= \f \quad &\text{in }I\times\Omega, \\
	\nabla \cdot \u &= 0 \quad &\text{in } I\times\Omega, \\
	\u &= \vec 0 \quad  &\text{on } I\times\partial \Omega, \\
	\u(0) &= \u_0 &\text{in } \Omega.
    \end{aligned}
\end{equation}
We assume that $\Omega\subset \mathbb{R}^d$, $d\in \{2,3\}$, is a bounded polygonal/polyhedral Lipschitz domain, $T>0$ and $I=(0,T]$.
In the next section we make precise assumptions on the data, which  allow for a weak formulation of the problem and provide regularity for the velocity   $\u \in L^2(I;H^1(\Omega)^d)$.
We consider fully discrete approximations of problem \eqref{eq:transient:Stokes}, where we use compatible finite elements (i.e. satisfying a uniform inf-sup condition) for the space discretization and the discontinuous Galerkin method for the temporal discretization. 
In our previous work \cite{Behringer_Leykekhman_Vexler_2022}, 
we have established the best type error estimate in ${L^{\infty}(I;L^2(\Omega))}$ norm,
\begin{equation}\label{eq:intro:best_approx}
    \norm{\u-\u_{{\tau}h}}_{L^{\infty}(I;L^2(\Omega))} \leq C\ell_\tau  \Big(\norm{\u-\vec{\chi}}_{L^{\infty}(I;L^2(\Omega))} +  \norm{\u - R_h^S(\u,p)}_{L^{\infty}(I;L^2(\Omega))} \Big), %
\end{equation}
where $\u_{{\tau}h}$ is the fully discrete finite element approximation of the velocity $\u$,  $R_h^S$ is the Ritz projection for the stationary Stokes problem, $\vec{\chi}$ is an arbitrary  function from the finite element approximation of the velocity spaces $X^w_{\tau}(\V_h)$, and $\ell_\tau$ is a logarithmic term. Such results are desirable for example in the analysis of PDE constrained optimal control problems since they do not require 
any additional regularity assumptions on the solution beyond the regularity which follows  directly from the  problem data. The estimate \eqref{eq:intro:best_approx} was an improvement of the main results in \cite[Thm.~4.9]{Chrysafinos_Walkington_2010}, where the error was estimated simultaneously  in ${L^{\infty}(I;L^2(\Omega))}$  and ${L^{2}(I;H^1(\Omega))}$ norms, and the bounds there required the presence of the "mixed terms".  A natural question, which actually was raised by one of the referee for \cite{Behringer_Leykekhman_Vexler_2022}, is it possible to obtain a best type approximation result just w.r.t. ${L^{2}(I;H^1(\Omega))}$ norm? Surprisingly, such results are not available even for scalar parabolic problems. In this note, we give positive answer to this question. In this paper we 
establish the following best type error estimates
\begin{equation}\label{eq:intro:best_approx L2}
    \norm{\u-\u_{{\tau}h}}_{L^2(\IOm)}  \le C  \Big(\norm{\u-\vec{\chi}}_{L^2(\IOm)} + \norm{\u-\pi_\tau \u}_{L^2(\IOm)}  +\norm{\u - R_h^S(\u,p)}_{L^2(\IOm)}  \Big)
\end{equation}
and 
\begin{equation}\label{eq:intro:best_approx H1}
\begin{aligned}
    \norm{\na(\u-\u_{{\tau}h})}_{L^2(\IOm)} \le C  \Big(\norm{\na(\u-\vec{\chi})}_{L^2(\IOm)} &+ \norm{\na(\u-\pi_\tau \u)}_{L^2(\IOm)}\\  &+\norm{\na(\u - R_h^S(\u,p))}_{L^2(\IOm)} \Big),
    \end{aligned}
\end{equation}
where as above $\u_{{\tau}h}$ is the fully discrete finite element approximation of the velocity $\u$, $\vec{\chi}$ is an arbitrary  function from the finite element approximation of the velocity spaces $X^w_{\tau}(\V_h)$, $\pi_\tau\u_{{\tau}h}$ is a certain time projection of the $\u$ on the time discrete space $X^w_{\tau}$, and  $R_h^S$ is the Ritz projection for the stationary Stokes problem.
The results \eqref{eq:intro:best_approx L2} and \eqref{eq:intro:best_approx H1} link the approximation error for the fully discrete transient Stokes problem to the best possible approximation of a continuous solution $\u$ in the discrete space $X^w_{\tau}(\V_h)$ as well as the approximation of the stationary Stokes problem in $\V_h$ and a time projection. Such results go in hand with only natural assumptions on the problem data and thus are desirable in  number of applications. The estimate \eqref{eq:intro:best_approx L2} does not require any additional regularity of the domain, thus allowing, e.g., for reentrant corners and edges. Moreover, \eqref{eq:intro:best_approx L2}  does not require the mesh to be quasi-uniform nor shape regular. Therefore, the result is also true for graded and even anisotropic meshes (provided the discrete inf-sup condition holds uniformly on such meshes). However, the second estimate \eqref{eq:intro:best_approx H1}, does require the stability of the discrete Leray projection in $H^1$ norm, which so far is established for the quasi-uniform meshes on convex domains. 
These results also naturally hold for the inhomogeneous heat equation, where the proofs can be simplified and extended to more general meshes, (see Section \ref{sec: parabolic}).

Under the additional assumption of convexity of $\Omega$ and some approximation properties of the discrete spaces, from \eqref{eq:intro:best_approx L2} and \eqref{eq:intro:best_approx H1}
we easily derive optimal error estimates of the form
\begin{equation}\label{eq: error in terms of data}
\norm{\u-\u_{{\tau}h}}_{L^{2}(I;H^s(\Omega))}\leq C \left({\tau}^{1-s/2} + h^{2-s}\right) \left( \norm{\f}_{L^{2}(I;L^2(\Omega))} + \norm{\u_0}_{\V^1}\right),\quad s=0,1,
\end{equation}
where $\V^1$ is an appropriate space introduced in the next section. This estimate is optimal  with respect to both the assumed regularity of the data and the order of convergence.

The above results naturally hold for simpler case of inhomogeneous heat equation with straightforward change of operators and function spaces. The 
$L^{2}(I;L^2(\Omega))$ best approximation result is essentially shown in \cite{MeidnerVexler:2008I}, instead of the best approximation the optimal error estimate in terms of data of the form  \eqref{eq: error in terms of data} is given, however, the 
$L^{2}(I;H^1(\Omega))$ best approximation result seems to be new.

The rest of the paper is structured as follows. In Section \ref{sec: continuous}, we introduce the function spaces, key operators and weak formulations of the problem with regularity results. In Section \ref{sec: discretization}, we introduce fully discrete Galerkin approximation of the problem. Section \ref{sec: Discrete results} is devoted to stability results of the fully discrete velocity solutions. In Section \ref{sec: Discrete results} we review some stability and approximation results for the stationery Stokes problem, which we requite for our main results Theorem \ref{chap:IS:theorem:L2_best_approximation}  and Theorem \ref{thm: H1 best approximation} in the next Section \ref{sec: main results}. Finally, in Section \ref{sec: parabolic}, we briefly discuss the extension of the main results to scalar parabolic problems.

%%%%%%%%%%%%%%%%%%%%%%%%%%%%%%%%%%%%%%%%%%%%%%%%%%%%%%%%%%%%%%%%%%%%%%%%%%%%%%%%%%%%%%%%%%%%%%%%%%%%%%%%%%%%%

\section{Continuous problem} \label{sec: continuous}

In this section, we introduce function spaces we require for the analysis of \eqref{eq:transient:Stokes} and state some of the main properties of these spaces.   In our presentation  we follow the notation and presentation of \cite[Section~1 and Section~2]{Guermond_2008}.

\subsection{Function spaces and Stokes operator}
In the following,
we will use the usual notation to denote the Lebesgue spaces $L^p$ and Sobolev spaces $H^k$ and $W^{k,p}$. The space  $L^2_0(\Om)$ will denote a subspace of $L^2(\Om)$  with mean-zero functions.  The inner product on $L^2(\Omega)$ as well as on $L^2(\Omega)^d$ is denoted by $(\cdot,\cdot)$. To improve readability, we omit the superscript $d$ when having for example $L^2(\Omega)^d$ appear as subscript to norms.
We also introduce 
the following function spaces
\begin{equation}
    \mathcal{V} = \Set{\v \in C^{\infty}_0(\Omega)^d | \nabla \cdot \v =0},\quad
    \V^0 =  \overline{\mathcal{V}}^{L^2},\quad    \V^1 =  \overline{\mathcal{V}}^{H^{1}},%
\end{equation}
where the notation in the last line denotes the completion of the space $\mathcal{V}$ with respect to the $L^2(\Omega)^d$ and $H^1(\Omega)^d$ topology, respectively. Notice that functions in $\V^1$ %
have zero boundary conditions in the trace sense.
Alternatively we have  
%$$
%\V^0= \set{\v \in L^{2}(\Om)^d | \nabla \cdot \v = 0}
%$$
%and
$$
\V^1= \set{\v \in H_0^{1}(\Om)^d | \nabla \cdot \v = 0}
$$
by \cite[Theorem III.4.1]{2011Galdi}.

We define the vector-valued Laplace operator
$$
-\Delta \colon D(\Delta) \rightarrow L^2(\Omega)^d,
$$
where the domain $D(\Delta)$ is understood with respect  to $L^2(\Omega)^d$ and is given as
\[
    D(\Delta) = \set{\v \in H_0^{1}(\Om)^d | \Delta \v \in L^2(\Omega)}.
\]
If the domain $\Omega$ is  convex, then the standard $H^2(\Omega)$ regularity implies $ D(\Delta) = H^1_0(\Omega)^d \cap H^2(\Omega)^d$. 
In addition, we introduce the space $\V^2$ as
\[
    \V^2 = \V^1 \cap D(\Delta).
\]
We will also use the following Helmholtz decomposition (cf. \cite[Chapter I, Theorem 1.4]{1977Temam} and \cite[Theorem III.1.1]{2011Galdi})
\begin{equation}
    L^2(\Omega)^d = \V^0 \oplus \nabla \left(H^{1}(\Omega) \cap L^2_0(\Omega)\right). \label{chap:IS:eq:helmholtz_decomposition}
\end{equation}
As usual we define the Helmholtz projection $\mathbb{P} \colon L^2(\Omega)^d \rightarrow \V^{0}$ (often also called the Leray  projection) as the $L^2$-projection from $L^2(\Om)^d$ onto $\V^{0}$. %
Using $\mathbb{P}$ and $-\Delta$, we define the Stokes operator $A\colon \V^2 \rightarrow \V^0$ as
\begin{equation}\label{eq:StokesOperator}
    A = - \mathbb{P} \Delta \vert_{\V^2}.
\end{equation}
The operator $A$ is a self adjoint, densely defined and positive definite operator on $\V^0$. We note that $D(A)=\V^2$. %
Similar to the Laplace operator, for convex polyhedral domains $\Om$ we have the following $H^2$ regularity bound due to
\cite{1989Dauge,1976Kellogg}
\begin{equation}\label{eq: H2 regularity A}
    \|\vec{v}\|_{H^2(\Om)}\le C\|A\vec{v}\|_{L^2(\Om)},\quad \forall \vec{v}\in \V^2.
\end{equation}

\subsection{Weak formulation and regularity}
In this section we discuss the weak formulation and the regularity  of the transient Stokes problem \eqref{eq:transient:Stokes}. We will use the notation $L^s(I;X)$ for the corresponding Bochner space with a Banach space $X$. Moreover, we will use also the standard notation $H^1(I;X)$. The inner product in $L^2(I;L^2(\Omega)^d)$  is denoted by $(\cdot,\cdot)_{I \times \Omega}$. We will also use the notation $(f,g)_{I \times \Omega}$ for the corresponding integral for $f \in L^2(I;L^2(\Omega)^d)$ and $g \in L^{2}(I;L^2(\Omega)^d)$.

\begin{proposition}\label{prop:weak_solution}
    Let $\f \in L^2(I;(\V^1)')$ and $\u_0 \in \V^0$. Then there exists a unique solution $\u \in L^2(I;\V^1) \cap C(\bar I,\V^0)$ with $\partial_t \u \in  L^2(I;(\V^1)')$ fulfilling
    $\u(0) = \u_0$ and
    \begin{equation}\label{eq: weak stokes}
	\langle \partial_t \u,\v\rangle + (\nabla \u,\nabla \v)_{I \times \Omega} = ( \f,\v)_{I \times \Omega} \quad \text{for all } \v \in L^2(I;\V^1).
    \end{equation}
    There holds 
    \[
	\norm{\nabla \u}_{L^2(I;L^2(\Omega))} +\norm{\u'}_{L^2(\bar I;(\V^1)')}+ \norm{\u}_{C(\bar I;L^2(\Omega))} \le C \left(\norm{\f}_{L^2(I;(\V^1)')} + \norm{\u_0}_{\V^0}\right).
    \]
\end{proposition}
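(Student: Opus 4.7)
The plan is to prove the proposition by the Faedo–Galerkin method using the spectral basis of the Stokes operator $A$. Since $A\colon \V^2\to \V^0$ is self-adjoint, positive definite, and has compact resolvent on the bounded domain $\Om$, there exists an orthonormal basis $\{\w_k\}_{k\ge 1}$ of $\V^0$ consisting of eigenfunctions of $A$ with eigenvalues $0<\lambda_1\le \lambda_2\le \cdots \to \infty$. A quick computation shows these eigenfunctions are also orthogonal with respect to the $H^1$ inner product (since $(\nabla\w_j,\nabla\w_k)=(A\w_j,\w_k)=\lambda_j(\w_j,\w_k)$), so the orthogonal projection $P_N$ onto $V_N:=\operatorname{span}\{\w_1,\dots,\w_N\}$ is simultaneously $\V^0$- and $\V^1$-stable with norm $1$.

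First, I would define the Galerkin approximation $\u_N(t)=\sum_{k=1}^N g_k(t)\w_k$ by requiring
$(\partial_t\u_N,\v)+(\nabla\u_N,\nabla\v)=\langle\f,\v\rangle$ for all $\v\in V_N$, with initial datum $\u_N(0)=P_N\u_0$. This amounts to a linear system of ODEs for the coefficients $g_k$, so existence and uniqueness of $\u_N$ on $\bar I$ is immediate. Testing with $\v=\u_N$ and using the Poincaré inequality on $\V^1$ together with Young's inequality gives
\[
\tfrac{d}{dt}\norm{\u_N}_{L^2(\Om)}^2 + \norm{\nabla\u_N}_{L^2(\Om)}^2 \le C\norm{\f}_{(\V^1)'}^2.
\]
Integrating in time and using $\|P_N\u_0\|\le\|\u_0\|$ yields uniform bounds on $\u_N$ in $L^\infty(I;\V^0)\cap L^2(I;\V^1)$ in terms of the data. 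A duality argument based on the $\V^1$-stability of $P_N$ (write $\langle\partial_t\u_N,\v\rangle = \langle\partial_t\u_N,P_N\v\rangle$ for $\v\in\V^1$) then provides a uniform bound on $\partial_t\u_N$ in $L^2(I;(\V^1)')$.

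Next, I would extract a weakly(-*) convergent subsequence and pass to the limit in the Galerkin identity for fixed test functions $\v\in V_M$ ($M\le N$), then use density of $\bigcup_M V_M$ in $\V^1$ (which holds because $A$ has a complete system of eigenfunctions and $\V^2$ is dense in $\V^1$) to extend to all $\v\in L^2(I;\V^1)$. Continuity $\u\in C(\bar I;\V^0)$ follows from the classical Lions–Magenes embedding $L^2(I;\V^1)\cap H^1(I;(\V^1)')\hookrightarrow C(\bar I;\V^0)$, which at the same time gives the integration-by-parts identity needed to make sense of $\u(0)=\u_0$ and to pass the initial condition to the limit. Passing to the lim inf in the a priori bounds yields the claimed estimate.

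Uniqueness follows by taking two solutions $\u_1,\u_2$, forming $\u=\u_1-\u_2\in L^2(I;\V^1)\cap H^1(I;(\V^1)')$ with $\u(0)=0$, testing the homogeneous equation with $\u$ itself (legal by the Lions–Magenes integration-by-parts), and using Gronwall. The main obstacle in this plan is purely bookkeeping, namely justifying that the Galerkin test functions $V_N$ are dense in $\V^1$ so that the limit equation holds against the full class $L^2(I;\V^1)$, and that the initial value is attained in the strong $\V^0$ sense; both reduce to using the spectral resolution of $A$ together with the Aubin–Lions compactness argument that provides strong $L^2(I;\V^0)$-convergence of a subsequence. Since the pressure is eliminated at the outset by working with divergence-free test functions and invoking the Helmholtz decomposition \eqref{chap:IS:eq:helmholtz_decomposition}, no delicate inf-sup argument is needed at this stage.
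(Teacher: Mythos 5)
Your proposal is correct, and it is essentially the proof the paper relies on: the paper simply cites \cite[Chapter III, Theorem 1.1]{1977Temam}, whose argument is exactly this Faedo--Galerkin construction, with the spectral basis of the Stokes operator used precisely so that $P_N$ is simultaneously $\V^0$- and $\V^1$-stable, which is what makes the $L^2(I;(\V^1)')$ bound on $\partial_t\u_N$ and the passage to the limit go through. No substantive gap; the points you flag (density of $\bigcup_N V_N$ in $\V^1$ via the spectral characterization of the form domain, and the Lions--Magenes embedding for the continuity and the initial condition) are handled exactly as you indicate.
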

\begin{proof}
    For the proof of the above result we refer to \cite[Chapter III, Theorem 1.1]{1977Temam}.
\end{proof}
It is well known, cf. again \cite{1977Temam}, that  the equation \eqref{eq: weak stokes} can be understood as an abstract parabolic problem
\begin{equation}\label{eq:stokes_abstract_equation}
    \begin{aligned}
	\partial_t  \u + A \u &= \mathbb{P}\f\qquad \text{for a.a. } t \in I,\\
	\u(0) &= \u_0,
    \end{aligned}	
\end{equation}
with the Stokes operator $A$ defined in \eqref{eq:StokesOperator}.

The next theorem provides the space-time weak formulation in both variables, velocity and pressure. Please note, that no additional regularity of the domain is required.  
\begin{theorem}\label{theorem:weak_with_preasure}
    Let $\f \in L^2(I;L^2(\Omega)^d)$  and $\u_0 \in \V^1$. 
    Then there exists a unique solution $(\u,p)$ with
    \[
	\u \in L^2(I;\V^1), \; \partial_t \u, A \u \in L^2(I; L^2(\Omega)^d) \quad \text{and} \quad p \in L^2(I;L^2_0(\Omega))
    \]
    fulfilling $\u(0)=\u_0$ and
    \begin{equation}\label{eq: weak stokes with pressure}
	(\partial_t \u,\v)_{I \times \Omega} + (\nabla \u,\nabla \v)_{I \times \Omega} - (p,\nabla \cdot \v)_{I \times \Omega} + (\nabla \cdot \u,\xi)_{I \times \Omega}= (\f,\v)_{I \times \Omega}
    \end{equation}
    for all
    \[
	\v \in L^2(I;H^1_0(\Omega)^d)\quad \text{and}\quad \xi \in L^2(I;L^2_0(\Omega)).
    \]
    There holds the estimate
    \[
  \norm{\partial_t \u}_{L^2(I;L^2(\Omega))}+\norm{A \u }_{L^2(I;L^2(\Omega))}+  \norm{p}_{L^2(I;L^2(\Omega))} \le C \left( \norm{\vec{f}}_{L^2(I;L^2(\Omega))} + \norm{\u_0}_{\V^1}\right).
    \]
\end{theorem}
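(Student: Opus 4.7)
The plan is to combine Proposition~\ref{prop:weak_solution} with an $L^2$-maximal-regularity estimate for the self-adjoint Stokes operator $A$, followed by a de~Rham/Ne\v{c}as step to recover the pressure. Since $\u_0\in\V^1\subset\V^0$ and $\f\in L^2(I;L^2(\Omega)^d)\subset L^2(I;(\V^1)')$, Proposition~\ref{prop:weak_solution} already supplies a unique $\u\in L^2(I;\V^1)\cap C(\bar I;\V^0)$ with $\partial_t\u\in L^2(I;(\V^1)')$ satisfying the abstract identity \eqref{eq:stokes_abstract_equation}; existence and uniqueness of the velocity thus reduce to that result, and uniqueness of $p$ will be pinned down by the zero-mean normalization once it is reconstructed.

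The heart of the argument is to upgrade the regularity of $\u$ by formally testing~\eqref{eq:stokes_abstract_equation} with $A\u$. Self-adjointness and positive definiteness of $A$ on $\V^0$, together with the identity $(A\v,\v)=\|\nabla\v\|^2$ for $\v\in\V^2$ (which extends to $\v\in\V^1=D(A^{1/2})$ through $\|A^{1/2}\v\|^2=\|\nabla\v\|^2$), give the pointwise-in-time energy identity $\tfrac{1}{2}\tfrac{d}{dt}\|\nabla\u\|^2+\|A\u\|^2=(\mathbb{P}\f,A\u)$. After Cauchy--Schwarz, Young, and integration in time this yields
\[
    \|\nabla\u\|_{L^\infty(I;L^2(\Omega))}^{2}+\|A\u\|_{L^2(I;L^2(\Omega))}^{2}\le C\bigl(\|\f\|_{L^2(I;L^2(\Omega))}^{2}+\|\u_0\|_{\V^1}^{2}\bigr),
\]
and then $\partial_t\u=\mathbb{P}\f-A\u\in L^2(I;L^2(\Omega)^d)$ inherits the same bound, using $\|\mathbb{P}\f\|\le\|\f\|$.

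With $\partial_t\u\in L^2(I;L^2(\Omega)^d)$ in hand, the pressure is produced by de~Rham. For almost every $t$, the distribution $\vec G(t):=\f(t)-\partial_t\u(t)+\Delta\u(t)$ lies in $H^{-1}(\Omega)^d$ (because $\u(t)\in H_0^1(\Omega)^d$), and the weak form \eqref{eq: weak stokes} says that $\vec G(t)$ annihilates every $\v\in\V^1$. Since $\Omega$ is a bounded Lipschitz domain, the classical de~Rham theorem combined with Ne\v{c}as's inequality furnishes a unique $p(t)\in L^2_0(\Omega)$ with $\nabla p(t)=\vec G(t)$ and $\|p(t)\|_{L^2(\Omega)}\le C\|\vec G(t)\|_{H^{-1}(\Omega)^d}\le C(\|\f(t)\|_{L^2}+\|\partial_t\u(t)\|_{L^2}+\|\nabla\u(t)\|_{L^2})$. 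Squaring and integrating in $t$, and inserting the bounds above, yields the stated pressure estimate, and by construction $(\u,p)$ satisfies \eqref{eq: weak stokes with pressure}.

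The main obstacle is making the test with $A\u$ rigorous, since pointwise membership $\u(t)\in\V^2$ is not a priori known. I would handle this via a spectral Galerkin approximation in the orthonormal eigenbasis of $A$ on $\V^0$ (which exists because $A$ is self-adjoint with compact resolvent): on each finite-dimensional projection the energy identity is exact, the bounds above are uniform in the Galerkin index, and a weak limit gives the claimed regularity. Notably this route does not invoke~\eqref{eq: H2 regularity A}, which is consistent with the theorem imposing no convexity on $\Omega$.
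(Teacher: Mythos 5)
Your argument is correct and is essentially the standard route: the velocity regularity $\partial_t\u, A\u\in L^2(I;L^2(\Omega)^d)$ via testing the abstract equation with $A\u$ (made rigorous by a spectral Galerkin approximation in the eigenbasis of the Stokes operator), followed by de~Rham/Ne\v{c}as to reconstruct $p\in L^2(I;L^2_0(\Omega))$ pointwise in time from the residual functional. The paper itself gives no proof here --- it defers to Theorem~2.4 of \cite{Behringer_Leykekhman_Vexler_2022}, which proceeds along the same lines --- so your proposal matches the intended argument; the only caveat worth flagging is that on a general Lipschitz domain the operator produced by your form/Galerkin construction is the form-defined Stokes operator, whose domain need not literally coincide with the set $\V^2=\V^1\cap D(\Delta)$ as written in the paper, but this is an imprecision in the paper's setup rather than a gap in your proof.
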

\begin{proof}
The proof is given in \cite{Behringer_Leykekhman_Vexler_2022}, Theorem 2.4.
\end{proof}

\begin{corollary}\label{cor:Omega_convex_2}
	Let the assumptions of Theorem \ref{theorem:weak_with_preasure} hold and let in addition the domain $\Omega$ be convex. Then we have $\u \in L^2(I;H^2(\Omega)^d) $ and $p \in L^2(I, H^1(\Omega))$ with the corresponding estimates
    \[
   \norm{ \u}_{L^2(I;H^2(\Omega))}+ \norm{p}_{L^2(I;H^1(\Omega))} \le C\left(\norm{\vec{f}}_{L^2(I;L^2(\Omega))} + \norm{\u_0}_{\V^1}\right).
    \]
\end{corollary}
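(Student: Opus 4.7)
The plan is straightforward: lift the abstract bound on $A\u$ provided by Theorem \ref{theorem:weak_with_preasure} to an $H^2$ bound on $\u$ via the elliptic Stokes regularity \eqref{eq: H2 regularity A}, and then read off the $H^1$ bound on $p$ directly from the strong form of the momentum equation. Convexity enters only at the very first step, through the cited Dauge/Kellogg result.

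First I would observe that Theorem \ref{theorem:weak_with_preasure} yields $A\u \in L^2(I;L^2(\Omega)^d)$, which in particular gives $\u(t)\in D(A)=\V^2$ for almost every $t\in I$. The pointwise bound \eqref{eq: H2 regularity A} then applies to $\u(t)$ and produces
$\norm{\u(t)}_{H^2(\Omega)} \le C\,\norm{A\u(t)}_{L^2(\Omega)}$
almost everywhere. Squaring, integrating over $I$, and invoking the $L^2(I;L^2)$-bound on $A\u$ from Theorem \ref{theorem:weak_with_preasure} immediately delivers the claimed estimate on $\norm{\u}_{L^2(I;H^2(\Omega))}$.

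For the pressure I would proceed by recovering the strong form of the momentum equation. Having $\u\in L^2(I;H^2(\Omega)^d)$, integration by parts is now classical: testing the weak formulation \eqref{eq: weak stokes with pressure} with $\v\in C_c^\infty(\Omega)^d$ and using $(\nabla \u,\nabla \v)=-(\Delta \u,\v)$ yields $(p,\nabla\cdot\v)=(\partial_t\u-\Delta\u-\f,\v)$ for a.e.\ $t\in I$. This identifies the distributional gradient of $p$ with the $L^2(\Omega)^d$-function $\f-\partial_t\u+\Delta\u$. Taking $L^2(I;L^2)$-norms then controls $\norm{\nabla p}_{L^2(I;L^2(\Omega))}$ by quantities already bounded in Theorem \ref{theorem:weak_with_preasure} and in the previous step, and combining with the $L^2(I;L^2_0(\Omega))$ bound on $p$ furnished by Theorem \ref{theorem:weak_with_preasure} gives the desired $L^2(I;H^1(\Omega))$ estimate.

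I do not expect any substantial obstacle. The only non-trivial ingredient is the elliptic regularity \eqref{eq: H2 regularity A}, which the authors have already cited in the convex polyhedral setting; the remainder is routine integration by parts and bookkeeping of constants. If a subtlety appears, it will be in verifying that the strong form of the momentum equation can indeed be recovered from the weak form, which is however justified by the newly obtained $H^2$ regularity together with the density of $C_c^\infty(\Omega)^d$ in the test function space.
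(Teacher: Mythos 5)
Your proposal is correct, and it is essentially the standard argument behind the result the paper simply cites (Temam, Ch.~III, Prop.~1.2, adapted to convex domains): pointwise-in-time stationary Stokes regularity applied to $-\Delta\u+\nabla p=\f-\partial_t\u\in L^2(\Omega)^d$. Your version is in fact self-contained given what the paper has already established, since the $H^2$ bound follows from the $L^2(I;L^2)$ control of $A\u$ in Theorem \ref{theorem:weak_with_preasure} combined with \eqref{eq: H2 regularity A}, and the pressure gradient is recovered correctly (with the right signs) from the strong momentum equation, so no gap remains.
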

\begin{proof}
This result is shown in \cite{1977Temam}, Proposition 1.2 in Chapter 3,   for $C^2$ domains, but the proof is valid for convex domains as well. See also Corollary 2.1 in \cite{Behringer_Leykekhman_Vexler_2022}.
\end{proof}

%%%%%%%%%%%%%%%%%%%%%%%%%%%%%%%%%%%%%%%%%%%%%%%%%%%%%%%%%%%%%%%%%%%%%%%%%%%%%%%%%%%%%%%%%%%%%%%%%%%%%%%%%%%%%
\section{Fully discrete discretization}\label{sec: discretization}
In this section we consider the  discrete version of the operators presented in the previous section and introduce fully discrete solution Galerkin solution.  

\subsection{Spatial discretization}\label{sec: space discretization}

Let $\{\Th\}$\label{glos:triangulation} be a 
family of triangulations of $\bar \Omega$, consisting of closed simplices, where we denote by $h$\label{glos:h} the maximum mesh-size. Let $\X_h \subset H^1_0(\Omega)^d$\label{glos:fe_space_velocity} and $M_h \subset L^2_0(\Omega)$\label{glos:fe_space_pressure} be a pair of compatible finite element spaces, i.e., them satisfying a uniform discrete inf-sup condition,
\begin{equation}\label{chap02:eq:discrete_infsup}
    \sup_{\v_h \in \X_h}\frac{(q_h,\nabla \cdot \v_h) }{\twonorm{\nabla \v_h}} \geq \beta \twonorm{q_h} \quad \forall q_h \in M_h, 
\end{equation}%
with a constant $\beta >0$ independent of $h$. 

We introduce the usual discrete Laplace operator $-\Delta_h \colon \X_h \rightarrow \X_h$ by
\begin{equation}
    (-\Delta_h \z_h, \v_h) = (\nabla \z_h, \nabla \v_h), \qquad \forall \z_h,\v_h \in \X_h.
\end{equation}
To define a discrete version of the Stokes operator $A$, we first define the space of discretely divergence-free vectors $\V_h$ as
\begin{equation}\label{eq:discretely_divergence_Xh}
    \V_h = \Set{ \v_h \in \X_h | (\nabla \cdot \v_h, q_h) =0 \quad \forall q_h \in M_h}.
\end{equation}
Using this space we can define the discrete Leray projection $\mathbb{P}_h\colon L^1(\Om)^d\to \V_h$ to be the $L^2$-projection onto $\V_h$, i.e., 
\begin{equation}\label{eq:discrete_Leray_projection}
(\mathbb{P}_h \u, \v_h)=(\u,\v_h) \quad\forall \v_h \in \V_h.
\end{equation}
Using $\mathbb{P}_h$, we define the discrete Stokes operator $A_h\colon \V_h \rightarrow \V_h$ as $A_h = - \mathbb{P}_h \Delta_h\vert_{\V_h}$. By this definition we have that for $\u_h \in \V_h$, $A_h\u_h \in \V_h$ fulfills
\begin{equation}
    (A_h \u_h, \v_h) = (\nabla \u_h, \nabla \v_h), \qquad \forall \v_h \in \V_h.
\end{equation}
Notice, since $\V_h \subset \X_h$,   for $\v_h \in \V_h$ we obtain
\begin{equation}\label{chap:IS:eigenvalue}
    (A_h \v_h, \v_h) = (\nabla \v_h, \nabla \v_h)  \geq \lambda_0 \stwonorm{\v_h},
\end{equation}
where $\lambda_0$ is the smallest eigenvalue of $-\Delta$.
This implies that the eigenvalues of $A_h$ are also positive and bounded from below by $\lambda_0$.

Moreover we define the orthogonal space $\V_h^\perp \subset \X_h$ as
\[
\V_h^\perp = \Set{\w_h \in X_h | (\w_h,\v_h) = 0 \quad \forall \v_h \in \V_h}.
\]
The following classical result, cf., e.g., \cite[Chapter II, Theorem 1.1]{1986Girault} will be used to provide existence and uniqueness of the fully discrete pressure in the sequel.
\begin{lemma}\label{lemma:stationary_existence_pressure}
For every $\w_h \in \V_h^\perp$ there exists a unique $p_h \in M_h$ such that
\[
(\w_h,\v_h) = (p_h, \nabla \cdot \v_h) \quad \forall \v_h \in \X_h.
\]
There holds
\[
\norm{p_h}_{L^2(\Omega)} \le \frac{1}{\beta} \norm{\nabla (-\Delta_h)^{-1} \w_h}_{L^2(\Omega)}.
\]
\end{lemma}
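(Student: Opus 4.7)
The plan is to prove uniqueness, existence, and the norm bound separately, using the discrete inf-sup condition \eqref{chap02:eq:discrete_infsup} as the central tool, together with a standard finite-dimensional orthogonality argument for existence.

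For \emph{uniqueness}, I would assume that $p_h\in M_h$ satisfies $(p_h,\nabla\cdot\v_h)=0$ for every $\v_h\in \X_h$. Dividing by $\twonorm{\nabla\v_h}$ and taking the supremum, \eqref{chap02:eq:discrete_infsup} forces $\beta\twonorm{p_h}\le 0$, so $p_h=0$.

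For \emph{existence}, I would reformulate the identity via a discrete operator. Let $B\colon \X_h\to M_h$ be the operator characterized by $(B\v_h,q_h)=(q_h,\nabla\cdot \v_h)$ for all $q_h\in M_h$; by the very definition \eqref{eq:discretely_divergence_Xh}, $\ker B=\V_h$. Its $L^2$-adjoint $B^\ast\colon M_h\to \X_h$ satisfies $(B^\ast q_h,\v_h)=(q_h,\nabla\cdot \v_h)$ for all $\v_h\in\X_h$, so finding $p_h$ is equivalent to showing $\w_h\in\operatorname{range}(B^\ast)$. In finite dimensions the fundamental theorem of linear algebra gives $\operatorname{range}(B^\ast)=(\ker B)^{\perp_{L^2}}=\V_h^{\perp}$, and by hypothesis $\w_h$ lies exactly in this space. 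Hence a $p_h\in M_h$ with the required property exists. The inf-sup condition is not strictly needed here for existence but guarantees that $B^\ast$ is injective, so $p_h$ is unique in the quotient-free sense already proved above.

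For the \emph{norm bound}, I would apply inf-sup to $p_h$, substitute the identity just established, and then use the discrete Laplace operator to re-express the right-hand side. Setting $\vphi_h=(-\Delta_h)^{-1}\w_h\in \X_h$, the definition of $-\Delta_h$ gives $(\w_h,\v_h)=(\nabla\vphi_h,\nabla\v_h)$, so by Cauchy--Schwarz
\[
  \sup_{\v_h\in\X_h}\frac{(\w_h,\v_h)}{\twonorm{\nabla\v_h}}
  =\sup_{\v_h\in\X_h}\frac{(\nabla\vphi_h,\nabla\v_h)}{\twonorm{\nabla\v_h}}
  \le \twonorm{\nabla\vphi_h}
  =\twonorm{\nabla(-\Delta_h)^{-1}\w_h}.
\]
Combined with $\beta\twonorm{p_h}\le \sup_{\v_h}\frac{(p_h,\nabla\cdot\v_h)}{\twonorm{\nabla\v_h}}=\sup_{\v_h}\frac{(\w_h,\v_h)}{\twonorm{\nabla\v_h}}$, this yields the asserted estimate.

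The only delicate step is the existence of $p_h$: one must recognize that the hypothesis $\w_h\in\V_h^\perp$ is precisely what places $\w_h$ in the range of the $L^2$-adjoint of the discrete divergence restricted to $M_h$. Everything else is a direct application of the inf-sup condition and of the defining property of $-\Delta_h$, so no further regularity or approximation property of the mesh is used.
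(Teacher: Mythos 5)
Your proof is correct and complete: the paper itself gives no argument for this lemma, citing only the classical result in Girault--Raviart, and your finite-dimensional range--kernel duality for the discrete divergence operator together with the inf-sup condition and the identity $(\w_h,\v_h)=(\nabla(-\Delta_h)^{-1}\w_h,\nabla\v_h)$ is precisely the standard argument behind that citation. No gaps.
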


\subsection{Temporal discretization: the discontinuous Galerkin method}
\label{sec: time discretization}
\leavevmode \\
In this section we introduce  the discontinuous Galerkin method for the time discretization of the transient Stokes equations, a similar method was considered, e.g., in \cite{Chrysafinos_Walkington_2010} and \cite{Behringer_Leykekhman_Vexler_2022}.
For that, we partition $I = (0,T]$ into subintervals $I_m = (t_{m-1},t_m]$ of length ${\tau}_m = t_m - t_{m-1}$, where $0= t_0<t_1<\dots <t_{M-1}<t_M = T$. The maximal and minimal time steps are denoted by ${\tau}=\max_m {\tau}_m$ and ${\tau}_{\min}=\min_m{\tau}_m$, respectively.  The time partition fulfills the following assumptions:
\begin{enumerate}
    \item There are constants $C,\beta >0$ independent of ${\tau}$ such that
	\begin{equation}
	    {\tau}_{\min} \geq C {\tau}^{\beta}. 
	\end{equation}
    \item There is a constant $\kappa >0$ independent of ${\tau}$ such that for all\\$m=1, 2, \dots, M-1$
	\begin{equation}
	    \kappa^{-1} \leq \frac{{\tau}_m}{{\tau}_{m+1}} \leq \kappa.
	\end{equation}
    \item It holds ${\tau} \leq \frac{T}{4}$.
\end{enumerate}
For a given Banach space $\mathcal{B}$ and the order $w \in \N$ we define the semi-discrete space $X_{\tau}^w(\mathcal{B})$ of piecewise polynomial functions in time as
\begin{equation}
    X_{\tau}^w(\mathcal{B}) = \Set{ \v_{\tau} \in L^2(I; \mathcal{B}) | \v_{\tau}\vert_{I_m} \in \mathcal{P}_{w,I_m}(\mathcal{B}), m = 1,2, \dots, M},
\end{equation}
where $\mathcal{P}_{w,I_m}(\mathcal{B})$ is the space of polynomial functions of degree less or equal $w$ in time with values in $\mathcal{B}$, i.e.,
\begin{equation}\label{chap:IS:eq:polynomial}
    \mathcal{P}_{w,I_m}(\mathcal{B}) = \Set{ \v_{\tau} \colon I_m \rightarrow \mathcal{B} | \v_{\tau}(t) = \sum_{j=0}^w \v^j\phi_j(t) ,\;\v^j \in \mathcal{B}, j =0, \dots, w}. 
\end{equation}
Here, $\{\phi_j(t)\}$ is a polynomial basis in $t$ of the space $\mathcal{P}_w(I_m)$ of polynomials with degree less or equal $w$ over the interval $I_m$.
We use the following standard notation for a function $\u \in X_{\tau}^w(L^2(\Omega)^d)$%
\begin{equation}
\u_m^+ = \lim_{\varepsilon \rightarrow 0^+}\u(t_m+\varepsilon), \quad \u_m^- = \lim_{\varepsilon \rightarrow 0^+}\u(t_m-\varepsilon),\quad [\u]_m = \u_m^+-\u_m^-.
\end{equation}
We define the bilinear form $\mathfrak{B}$ by
    \[
	\mathfrak{B}(\u,\v) = \sum_{m=1}^M (\partial_t\u, \v )_{I_m\times \Omega} + (\nabla \u, \nabla \v)_{I\times \Omega}
	+ \sum_{m=2}^M ([\u]_{m-1},\v^+_{m-1})_{\Omega} + (\u_0^+,\v_0^+)_{\Omega}.
    \]
    With this bilinear form we define the fully discrete approximation for the transient Stokes problem on the discrete divergence free space $X_{\tau}^w(\V_h)$:
    \begin{equation}\label{eq:fully_discrete_div_free}
	\u_{\tau h} \in X_{\tau}^w(\V_h) \;:\; \mathfrak{B}(\u_{\tau h},\v_{\tau h}) = (\f,\v_{\tau h})_{I \times \Omega} + (\u_0, \v_{\tau h,0}^+)_{\Omega} \quad \forall \v_{\tau h} \in X_{\tau}^w(\V_h).
    \end{equation}
    By a standard argument one can see that this formulation possesses a unique solution (existence follows from uniqueness by the fact that \eqref{eq:fully_discrete_div_free} is equivalent to a quadratic system of linear equations). 
 \begin{remark}\label{remark_on_Ph}
    Note, that the data $\f$ and $\u_0$ in \eqref{eq:fully_discrete_div_free} can be replaced by $\mathbb{P}_h \f$ and $\mathbb{P}_h \u_0$ respectively (with $\mathbb{P}_h$ being the discrete Leray projection \eqref{eq:discrete_Leray_projection})  without changing the solution. 
\end{remark}

The above formulation is not a conforming discretization of the divergence free formulation \eqref{eq: weak stokes} due to the fact that $X_{\tau}^w(\V_h)$ is not a subspace of $L^2(I;\V^1)$. In order to introduce a velocity-pressure discrete formulation (as a discretization of \eqref{eq: weak stokes with pressure}) we consider the following bilinear form
\begin{multline}
    B((\u,p),(\v,q)) = \sum_{m=1}^M ( \partial_t\u, \v )_{I_m\times \Omega} + (\nabla \u, \nabla \v)_{I\times \Omega} - (p,\nabla \cdot \v)_{I\times\Omega} + (\nabla \cdot \u,q)_{I\times\Omega}\\
    + \sum_{m=2}^M ([\u]_{m-1},\v^+_{m-1})_{\Omega} + (\u_0^+,\v_0^+)_{\Omega}.
\end{multline}
The corresponding fully discrete formulation reads: find $(\u_{{\tau}h},p_{{\tau}h})\in X^w_{\tau}(\X_h \times M_h)$ such that
\begin{equation}\label{eq:spacetime_discretization}
   B((\u_{{\tau}h},p_{{\tau}h}),(\v_{{\tau}h},q_{{\tau}h})) = (\f,\v_{{\tau}h})_{I\times \Omega} + (\u_0, \v_{{\tau}h,0}^+)_{\Omega} \qquad \forall (\v_{{\tau}h},q_{{\tau}h}) \in X^w_{\tau}(\X_h \times M_h).
\end{equation}
We note, that for the temporal discretization we use polynomials of the same order for the velocity and pressure. The next proposition states the equivalence of the formulation \eqref{eq:fully_discrete_div_free} and \eqref{eq:spacetime_discretization}.
\begin{proposition}
For a solution $(\u_{{\tau}h},p_{{\tau}h})$ of \eqref{eq:spacetime_discretization} the discrete velocity $\u_{{\tau}h}$ fulfills \eqref{eq:fully_discrete_div_free}. Moreover, for a solution $\u_{{\tau}h}$ of \eqref{eq:fully_discrete_div_free} there exists a unique $p_{{\tau}h} \in X^w_{\tau}(M_h)$ such that the pair $(\u_{{\tau}h},p_{{\tau}h})$ fulfills \eqref{eq:spacetime_discretization}. In particular the solution of \eqref{eq:spacetime_discretization} is unique.
\end{proposition}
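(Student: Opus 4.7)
I would prove the two implications separately; uniqueness of the velocity--pressure pair then follows from the already-noted uniqueness of \eqref{eq:fully_discrete_div_free}. The structural observation that drives everything is that the pressure variable enters $B$ only through $(p,\nabla\cdot\v)_{I\times\Omega}$ and carries no time-jump or initial term, so the pressure equation decouples completely across the intervals $I_m$ and, within each $I_m$, across the coefficients with respect to any chosen basis of $\Ppol_w(I_m)$. For the first implication, take a solution $(\u_{{\tau}h},p_{{\tau}h})$ of \eqref{eq:spacetime_discretization} and test with $(\vec 0, q_{{\tau}h})$ for $q_{{\tau}h}\in X^w_{\tau}(M_h)$ to obtain $(\nabla\cdot \u_{{\tau}h}, q_{{\tau}h})_{I\times\Omega}=0$. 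Expanding $\u_{{\tau}h}|_{I_m}$ and $q_{{\tau}h}|_{I_m}$ in a common basis of $\Ppol_w(I_m)$ forces every spatial coefficient of $\u_{{\tau}h}$ to lie in $\V_h$, i.e.\ $\u_{{\tau}h}\in X^w_{\tau}(\V_h)$. Testing next with $(\v_{{\tau}h},0)$ for $\v_{{\tau}h}\in X^w_{\tau}(\V_h)$ eliminates the pressure term by the definition of $\V_h$, so $B$ reduces to $\mathfrak{B}$ and \eqref{eq:fully_discrete_div_free} is recovered.

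For the reverse implication, fix $\u_{{\tau}h}$ solving \eqref{eq:fully_discrete_div_free} and work on each subinterval $I_m$ separately. Choose an $L^2(I_m)$-orthonormal basis $\{\psi_j\}_{j=0}^w$ of $\Ppol_w(I_m)$. For every $j$, plug $\v_{{\tau}h}=\psi_j \v$, extended by zero outside $I_m$, into the residual of \eqref{eq:spacetime_discretization} without the pressure term; this defines a linear functional $R_m^j\colon \X_h\to\R$ which collects the time-derivative, diffusion, jump/initial, and right-hand side contributions on $I_m$. The divergence-free equation \eqref{eq:fully_discrete_div_free} says precisely that $R_m^j$ vanishes on $\V_h$, so its $L^2(\Omega)$-Riesz representative $\w^{m,j}\in \X_h$ lies in $\V_h^\perp$. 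Lemma \ref{lemma:stationary_existence_pressure} then supplies a unique $p^{m,j}\in M_h$ with $R_m^j(\v)=(p^{m,j},\nabla\cdot \v)$ for all $\v\in \X_h$; defining $p_{{\tau}h}|_{I_m}:=\sum_j p^{m,j}\psi_j$ and invoking orthonormality of $\{\psi_j\}$ verifies that $(\u_{{\tau}h},p_{{\tau}h})$ satisfies \eqref{eq:spacetime_discretization}.

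For uniqueness of the pair, two solutions of \eqref{eq:spacetime_discretization} share the same velocity by the first implication and the uniqueness statement already attached to \eqref{eq:fully_discrete_div_free}; the pressure difference $q_{{\tau}h}$ then satisfies $(q_{{\tau}h},\nabla\cdot \v_{{\tau}h})_{I\times\Omega}=0$ for every $\v_{{\tau}h}\in X^w_{\tau}(\X_h)$, and picking $\v_{{\tau}h}=\psi_j \v$ with $\v$ chosen via \eqref{chap02:eq:discrete_infsup} to pair with each spatial coefficient of $q_{{\tau}h}|_{I_m}$ forces $q_{{\tau}h}=0$. The only real difficulty is organizational: one must exploit the complete absence of time coupling in the pressure equation to split the problem into independent stationary pieces to which Lemma \ref{lemma:stationary_existence_pressure} applies. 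No new analytic ingredients beyond the inf-sup condition and that lemma are required.
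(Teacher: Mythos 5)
Your proof is correct and follows the route the paper intends: the paper defers the details to \cite{Behringer_Leykekhman_Vexler_2022}, Proposition 4.1, but it introduces Lemma \ref{lemma:stationary_existence_pressure} precisely so that the pressure can be recovered interval by interval (and temporal basis coefficient by coefficient) from the residual functional on $\V_h^\perp$, which is exactly what you do. The only point worth making explicit in a write-up is that the temporal Gram matrix argument (or orthonormality of $\{\psi_j\}$) is what lets you pass from the space-time identity $(\nabla\cdot\u_{\tau h},q_{\tau h})_{I\times\Omega}=0$ to the statement that each spatial coefficient lies in $\V_h$, but you have indicated this correctly.
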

\begin{proof}
The proof is given in \cite{Behringer_Leykekhman_Vexler_2022}, Proposition 4.1.
\end{proof}

The next proposition provides the Galerkin orthogonality relation for the velocity pressure discretization \eqref{eq:spacetime_discretization}, which is essential for our analysis. Please note, that for the velocity formulation \eqref{eq:fully_discrete_div_free} the Galerkin orthogonality does not hold due to the fact that $X_{\tau}^w(\V_h)$ is not a subspace of $L^2(I,\V^1)$.
\begin{proposition}\label{prop:Galerkin_orthogonality}
Let the assumptions of \eqref{theorem:weak_with_preasure} be fulfilled, i.e., $\f \in L^2(I;L^2(\Omega)^d)$  and $\u_0 \in \V^0$. Then there holds for the solution $(\u,p)$ of \eqref{eq: weak stokes with pressure}
\[
B((\u,p),(\v_{\tau h},q_{\tau h})) =  (\f,\v_{{\tau}h})_{I\times \Omega} + (\u_0, \v_{{\tau}h,0}^+)_{\Omega} \qquad \forall (\v_{{\tau}h},q_{{\tau}h}) \in X^w_{\tau}(\X_h \times M_h)
\]
and consequently 
\begin{equation}\label{eq:Galerkin_orthogonality}
B((\u-\u_{\tau h},p-p_{\tau h}),(\v_{\tau h},q_{\tau h})) = 0 \qquad \forall (\v_{{\tau}h},q_{{\tau}h}) \in X^w_{\tau}(\X_h \times M_h).
\end{equation}
\end{proposition}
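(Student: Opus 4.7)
The plan is to test the continuous weak formulation \eqref{eq: weak stokes with pressure} with discrete functions and then account for the extra jump and initial terms appearing in $B$. First I would observe that since $\X_h \subset H^1_0(\Omega)^d$ and $M_h \subset L^2_0(\Omega)$, any pair $(\v_{\tau h}, q_{\tau h}) \in X^w_{\tau}(\X_h \times M_h)$ belongs to $L^2(I; H^1_0(\Omega)^d) \times L^2(I; L^2_0(\Omega))$, so it is an admissible test pair in \eqref{eq: weak stokes with pressure}. Substituting yields
\[
(\partial_t \u, \v_{\tau h})_{I \times \Omega} + (\nabla \u, \nabla \v_{\tau h})_{I \times \Omega} - (p, \nabla \cdot \v_{\tau h})_{I \times \Omega} + (\nabla \cdot \u, q_{\tau h})_{I \times \Omega} = (\f, \v_{\tau h})_{I \times \Omega}.
\]

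Next I would reconcile the first term with the piecewise-in-time integral appearing in $B$: since Theorem \ref{theorem:weak_with_preasure} gives $\partial_t \u \in L^2(I; L^2(\Omega)^d)$, the partition points $\{t_m\}$ are of measure zero in $I$, so
\[
\sum_{m=1}^M (\partial_t \u, \v_{\tau h})_{I_m \times \Omega} = (\partial_t \u, \v_{\tau h})_{I \times \Omega}.
\]
Then I would treat the extra terms in $B$ arising from the DG structure. Because $\u \in C(\bar I; L^2(\Omega))$ is continuous in time across each $t_{m-1}$, we have $[\u]_{m-1} = \u_{m-1}^+ - \u_{m-1}^- = 0$ for every $m \geq 2$, making the jump sum $\sum_{m=2}^M ([\u]_{m-1}, \v^+_{\tau h, m-1})_\Omega$ vanish. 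Similarly, $\u_0^+ = \u(0) = \u_0$ in $L^2(\Omega)^d$, so $(\u_0^+, \v^+_{\tau h, 0})_\Omega = (\u_0, \v^+_{\tau h, 0})_\Omega$.

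Putting these ingredients together would give
\[
B((\u,p),(\v_{\tau h}, q_{\tau h})) = (\f,\v_{\tau h})_{I\times\Omega} + (\u_0, \v^+_{\tau h, 0})_\Omega,
\]
which is the first claim. Subtracting the definition \eqref{eq:spacetime_discretization} of $(\u_{\tau h}, p_{\tau h})$ from this identity and using bilinearity of $B$ then yields the Galerkin orthogonality \eqref{eq:Galerkin_orthogonality}. The only subtle point is confirming that the piecewise-polynomial-in-time test functions are genuinely admissible in the continuous formulation and that the DG jump/initial contributions align correctly with the continuity of $\u$ and the initial condition; this is essentially bookkeeping rather than a real obstacle, and the argument does not require any extra regularity beyond that already supplied by Theorem \ref{theorem:weak_with_preasure}.
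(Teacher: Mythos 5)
Your argument is correct and is exactly the standard consistency argument that the paper delegates to the reference (Behringer--Leykekhman--Vexler 2022, Proposition 4.2): test the continuous formulation \eqref{eq: weak stokes with pressure} with the discrete pair, use $\partial_t\u\in L^2(I;L^2(\Omega)^d)$ to split the time integral over the subintervals, and use the resulting continuity $\u\in C(\bar I;L^2(\Omega)^d)$ to kill the jump terms and identify $\u_0^+$ with $\u_0$. No gaps; the bookkeeping is handled properly.
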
 
\begin{proof}
The proof is given in \cite{Behringer_Leykekhman_Vexler_2022}, Proposition 4.2.
\end{proof}

In the following, we also consider a dual problem, where we use a dual representation of the bilinear form $B$
\begin{equation}\label{eq:dualB}
\begin{aligned}
    B((\u,p),(\v,q)) = -&\sum_{m=1}^M \langle \u, \partial_t\v \rangle_{I_m\times \Omega} + (\nabla \u, \nabla \v)_{I\times \Omega} - (p,\nabla \cdot \v)_{I\times\Omega}\\ &+ (\nabla \cdot \u,q)_{I\times\Omega}
    - \sum_{m=1}^{M-1} (\u^{-}_{m},[\v]_{m})_{\Omega} + (\u_M^-,\v_M^-)_{\Omega},
\end{aligned}
\end{equation}
which is obtained by integration by parts and rearranging the terms in the sum.

We will also need the following projection $\pi_\tau$ for $v \in C(I,L^2(\Omega))$ with $\pi_\tau v|_{I_m} \in P_q(L^2(\Omega))$ for $m=1,2,\dots,M$ on each subinterval $I_m$ by
\begin{equation}\label{eq: projection pi_k}
\begin{aligned}
(\pi_\tau v-v,\phi)_{I_m\times \Omega}&=0,\quad \forall \phi\in P_{q-1}(I_m,L^2(\Omega)),\quad q>0,\\
\pi_\tau v(t_m^-)=v(t_m^-).
\end{aligned}
\end{equation}
In the case $q = 0$, $\pi_\tau v$ is defined solely by the second condition.
The following approximation property also hold
\begin{equation}\label{eq: approximation of pi_k}
\|\pi_\tau u-u\|_{L^2(I_m)}\le C\tau \|\partial_t u\|_{L^2(I_m)}\, \quad \forall u\in H^1(I_m).
\end{equation}

\section{Fully discrete stability results}\label{sec: Discrete results}
In this section, we establish stability result, which are essential for our main result. 
\begin{theorem}\label{thm: stability in for Ah}
For $\f \in L^2(I,L^2(\Omega)^d)$ and $\u_0 \in \V^1$. Let $\u_{{\tau}h}\in X^w_{\tau}(\V_h)$ be the solution to \eqref{eq:fully_discrete_div_free}. Then  there holds
\begin{multline}\label{eq:dmpr_s2}
	\Bigg( \sum_{m=1}^M \norm{\partial_t \u_{{\tau}h}}^2_{L^2(\ImOm)}\Bigg)^{1/2} + \norm{A_h \u_{{\tau}h}}_{L^2(I;L^2(\Omega))}\\%
	+ \Bigg( \sum^M_{m=1} {\tau}_m \norm{{\tau}_m^{-1} [\u_{{\tau}h}]_{m-1}}^2_{L^2(\Omega)} \Bigg)^{1/2} 
	\leq C  \left(\norm{\mathbb{P}_h\f}_{L^2(I; L^2(\Omega))}+\norm{\na\mathbb{P}_h\u_0}_{L^2(\Omega)}\right).
\end{multline}
\end{theorem}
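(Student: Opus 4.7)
The theorem asserts discrete maximal parabolic $L^2$-regularity for the dG-in-time/conforming-in-space discretization of the Stokes semigroup generated by the self-adjoint positive definite operator $A_h$ on $\V_h$. My plan is to proceed in three steps: test with $A_h\u_{\tau h}$ to control $\|A_h\u_{\tau h}\|_{L^2(I;L^2)}$, extract the pointwise strong form on each subinterval to handle $\partial_t\u_{\tau h}$, and then separate the derivative from the jump via a spectral decomposition, which is where the real difficulty lies.

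First I would test \eqref{eq:fully_discrete_div_free} with $\v_{\tau h}=A_h\u_{\tau h}\in X^w_\tau(\V_h)$, using Remark \ref{remark_on_Ph} to replace $\f,\u_0$ by $\mathbb{P}_h\f,\mathbb{P}_h\u_0$ on the right-hand side. Using $(\partial_t\u_{\tau h},A_h\u_{\tau h})=\tfrac{1}{2}\partial_t\|\nabla\u_{\tau h}\|^2$ on each $I_m$ together with the algebraic identity $(a-b,a)=\tfrac{1}{2}(\|a\|^2-\|b\|^2+\|a-b\|^2)$ applied to the jump contributions $([\u_{\tau h}]_{m-1},A_h\u_{\tau h,m-1}^+)=(\nabla[\u_{\tau h}]_{m-1},\nabla\u_{\tau h,m-1}^+)$, the telescoping sums collapse to
\begin{equation*}
\|A_h\u_{\tau h}\|^2_{L^2(I;L^2(\Om))}+\tfrac{1}{2}\|\nabla\u_{\tau h,M}^-\|^2+\tfrac{1}{2}\|\nabla\u_{\tau h,0}^+\|^2+\sum_{m=1}^{M-1}\tfrac{1}{2}\|\nabla[\u_{\tau h}]_m\|^2=(\mathbb{P}_h\f,A_h\u_{\tau h})_{I\times\Om}+(\nabla\mathbb{P}_h\u_0,\nabla\u_{\tau h,0}^+)_\Om,
\end{equation*}
and Young's inequality yields the bound on $\|A_h\u_{\tau h}\|_{L^2(I;L^2)}$ and on the gradient-jump sum in terms of the data.

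Second, I would extract the strong form on each $I_m$ by testing with $\v_m(t)=q(t)\phi$ for $\phi\in\V_h$ and $q\in P_w(I_m)$ supported on $I_m$. Varying $q$ gives the pointwise identity
\[
\partial_t\u_{\tau h}(t)+A_h\u_{\tau h}(t)=\Pi_m^w\mathbb{P}_h\f(t)-K_m(t)\,[\u_{\tau h}]_{m-1},\qquad t\in I_m,
\]
in $\V_h$, where $\Pi_m^w$ is the $L^2(I_m)$-projection onto $P_w(I_m;\V_h)$, $K_m(t)=K(t,t_{m-1}^+)$ is the reproducing kernel of $P_w(I_m)$ in $L^2(I_m)$ at the left endpoint with $\|K_m\|_{L^2(I_m)}\le C(w)\tau_m^{-1/2}$ by scaling, and the convention $\u_{\tau h,0}^-:=\mathbb{P}_h\u_0$ absorbs the initial datum term for $m=1$. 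Combined with Step~1 this yields
\[
\|\partial_t\u_{\tau h}\|_{L^2(I_m;L^2(\Om))}\le C\left(\|\mathbb{P}_h\f\|_{L^2(I_m;L^2(\Om))}+\|A_h\u_{\tau h}\|_{L^2(I_m;L^2(\Om))}+\tau_m^{-1/2}\|[\u_{\tau h}]_{m-1}\|_{L^2(\Om)}\right).
\]

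The main obstacle is the third step: separating $\partial_t\u_{\tau h}$ from the jump in the above bound. A direct energy absorption fails because the mean of $\partial_t\u_{\tau h}$ over $I_m$ equals $\tau_m^{-1}(\u_{\tau h,m}^--\u_{\tau h,m-1}^+)$ and may cancel against $\tau_m^{-1}[\u_{\tau h}]_{m-1}$, and an attempt to close the loop through Poincar\'e on the gradient-jumps produces constants that cannot be absorbed. I would therefore decompose $\u_{\tau h}(t)=\sum_k c_k(t)\phi_k$ in the orthonormal $A_h$-eigenbasis $\{\phi_k\}\subset\V_h$ with eigenvalues $\lambda_k>0$; each scalar coefficient $c_k$ satisfies the same dG-in-time discretization of the decoupled scalar ODE $c_k'+\lambda_k c_k=f_k:=(\mathbb{P}_h\f,\phi_k)$ with initial datum $c_k^0:=(\mathbb{P}_h\u_0,\phi_k)$. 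For this scalar problem, discrete maximal $L^2$-regularity — classical for scalar dG-in-time discretizations and established uniformly in the eigenvalue in the works of Leykekhman--Vexler — provides
\[
\|c_k'\|^2_{L^2(I)}+\lambda_k^2\|c_k\|^2_{L^2(I)}+\sum_{m=1}^M\tau_m^{-1}|[c_k]_{m-1}|^2\le C\left(\|f_k\|^2_{L^2(I)}+\lambda_k|c_k^0|^2\right),
\]
with $C$ independent of $\lambda_k$. Summing over $k$ and using Parseval, together with $\sum_k\lambda_k^2\|c_k\|^2_{L^2(I)}=\|A_h\u_{\tau h}\|^2_{L^2(I;L^2)}$ and $\sum_k\lambda_k|c_k^0|^2=\|\nabla\mathbb{P}_h\u_0\|^2$, yields the claimed joint bound on $(\sum_m\|\partial_t\u_{\tau h}\|^2_{L^2(I_m;L^2)})^{1/2}+(\sum_m\tau_m^{-1}\|[\u_{\tau h}]_{m-1}\|^2)^{1/2}$, and combining with Step~1 completes the proof.
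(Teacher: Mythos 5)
Your proposal is correct, and Steps 1--2 coincide with the standard route: the paper proves this theorem by pointing to Meidner--Vexler (and its own fully written-out proof of the companion Theorem~\ref{thm: stability in for grad_Ah} shows exactly what that route is), namely three energy tests with $\v_{\tau h}=A_h\u_{\tau h}$, then $\v_{\tau h}|_{I_m}=(t-t_{m-1})\partial_t\u_{\tau h}$, then $\v_{\tau h}|_{I_m}=[\u_{\tau h}]_{m-1}$. Where you diverge is the handling of $\partial_t\u_{\tau h}$ and the jumps. The difficulty you flag in Step 3 is real for the ordering you chose (strong form first, jumps entering the $\partial_t$ bound with weight $\tau_m^{-1/2}$), but it is precisely what the weighted test function $(t-t_{m-1})\partial_t\u_{\tau h}$ is designed to circumvent: this test function vanishes at $t_{m-1}^+$, so the jump term drops out of the identity entirely, one gets $\int_{I_m}(t-t_{m-1})\|\partial_t\u_{\tau h}\|^2\,dt\le\int_{I_m}(t-t_{m-1})\|\mathbb{P}_h\f-A_h\u_{\tau h}\|^2\,dt$ with no jump on the right, and the weight is removed by the inverse estimate \eqref{Inverse_Holder_type_inequality}; the jumps are then bounded \emph{last}, by testing with the time-constant function $[\u_{\tau h}]_{m-1}$ and using the already-controlled quantities. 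Your alternative --- diagonalizing $A_h$ in its orthonormal eigenbasis of $\V_h$, observing that the dG system decouples into independent scalar problems $c_k'+\lambda_k c_k=f_k$, and summing the scalar maximal-regularity bounds via Parseval --- is a valid reduction (all terms of $\mathfrak{B}$ are diagonal in that basis, and your Parseval identities for $\|A_h\u_{\tau h}\|$, $\|\na\mathbb{P}_h\u_0\|$ and the jumps are right), and it buys conceptual clarity about why only self-adjointness and positivity of $A_h$ matter. But it outsources the entire analytic content to the asserted scalar estimate with constant uniform in $\lambda_k\in(0,\infty)$; that uniformity is exactly the nontrivial point, and its proof is again the weighted-test-function argument (or an explicit analysis of the dG time-stepping operator). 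So your argument is complete only modulo that cited scalar lemma, whereas the paper's route is self-contained and operator-level, requiring no spectral decomposition and no mesh or domain restrictions beyond what is already assumed.
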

\begin{proof}
The proof follows the lines of the corresponding proof in \cite{MeidnerVexler:2008I} replacing $-\Delta_h$ by $A_h$.
\end{proof}

For the $L^2(I; H^1(\Om)^d)$ norm estimates we also need the following result.
\begin{theorem}\label{thm: stability in for grad_Ah}
For $\f \in L^2(I,L^2(\Omega)^d)$ and $\u_0 \in \V^0$. Let $\u_{{\tau}h}\in X^w_{\tau}(\V_h)$ be the solution to \eqref{eq:fully_discrete_div_free}. Then  there holds
\begin{multline}\label{eq:dmpr_s3}
	\Bigg( \sum_{m=1}^M \norm{\partial_t \na A_h^{-1}\u_{{\tau}h}}^2_{L^2(I_m; L^2(\Omega))}\Bigg)^{1/2} + \norm{\nabla \u_{{\tau}h}}_{L^2(I;L^2(\Omega))}\\%
	+ \Bigg( \sum^M_{m=1} {\tau}_m \norm{{\tau}_m^{-1} [\na A_h^{-1}\u_{{\tau}h}]_{m-1}}^2_{L^2(\Omega)} \Bigg)^{1/2} 
	\leq C \left( \norm{\na A_h^{-1}\mathbb{P}_h\f}_{L^2(I; L^2(\Omega))}+\norm{\mathbb{P}_h\u_0}_{L^2(\Omega)}\right).
\end{multline}
\end{theorem}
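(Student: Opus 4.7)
\medskip
\noindent\textbf{Proof plan.} I would reduce Theorem~\ref{thm: stability in for grad_Ah} to the already-established Theorem~\ref{thm: stability in for Ah} by applying it to the transformed discrete solution $\tilde{\u}_{\tau h} := A_h^{-1/2} \u_{\tau h}$. The fractional power $A_h^{-1/2}$ is well defined via spectral calculus because $A_h$ is self-adjoint and, by \eqref{chap:IS:eigenvalue}, positive definite on the finite-dimensional space $\V_h$, and it maps $\V_h$ into itself.

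The first step is a commutation identity: if $\u_{\tau h}\in X_\tau^w(\V_h)$ solves \eqref{eq:fully_discrete_div_free} with data $(\f,\u_0)$, then for any $s\in\R$ the function $A_h^s\u_{\tau h}$ solves the same discrete problem with data $(A_h^s\mathbb{P}_h\f, A_h^s\mathbb{P}_h\u_0)$. To see this, I would test \eqref{eq:fully_discrete_div_free} against $A_h^s\v_{\tau h}$ (which still lies in $X_\tau^w(\V_h)$, by Remark~\ref{remark_on_Ph}), and move $A_h^s$ across every term of $\mathfrak{B}$ using self-adjointness of $A_h^s$ on $\V_h$ and the identity $(\nabla\phi,\nabla A_h^s\psi)=(A_h^{1+s}\phi,\psi)=(\nabla A_h^s\phi,\nabla\psi)$ for $\phi,\psi\in\V_h$. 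The same manoeuvre on the right-hand side, together with the self-adjointness of $\mathbb{P}_h$, produces the new data $(A_h^s\mathbb{P}_h\f,A_h^s\mathbb{P}_h\u_0)$.

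The second step applies Theorem~\ref{thm: stability in for Ah} with $s=-1/2$ to $\tilde{\u}_{\tau h}$, whose data $A_h^{-1/2}\mathbb{P}_h\f \in \V_h$ and $A_h^{-1/2}\mathbb{P}_h\u_0 \in \V_h\subset \V^1$ satisfy the hypotheses. I then translate every resulting norm back to $\u_{\tau h}$ using the two elementary spectral identities
\[
\|A_h^{1/2}\phi\|_{L^2(\Omega)} = \|\nabla\phi\|_{L^2(\Omega)}, \qquad
\|A_h^{-1/2}\phi\|_{L^2(\Omega)} = \|\nabla A_h^{-1}\phi\|_{L^2(\Omega)} \qquad (\phi\in\V_h).
\]
On the left-hand side, $\|\partial_t\tilde{\u}_{\tau h}\|_{L^2}=\|\nabla A_h^{-1}\partial_t\u_{\tau h}\|_{L^2}$, $\|A_h\tilde{\u}_{\tau h}\|_{L^2}=\|A_h^{1/2}\u_{\tau h}\|_{L^2}=\|\nabla\u_{\tau h}\|_{L^2}$, and the jump term transforms analogously. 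On the right-hand side, $\|A_h^{-1/2}\mathbb{P}_h\f\|_{L^2}=\|\nabla A_h^{-1}\mathbb{P}_h\f\|_{L^2}$ (with $\mathbb{P}_h$ acting as the identity on $A_h^{-1/2}\mathbb{P}_h\f\in\V_h$), and $\|\nabla A_h^{-1/2}\mathbb{P}_h\u_0\|_{L^2}=\|A_h^{1/2}A_h^{-1/2}\mathbb{P}_h\u_0\|_{L^2}=\|\mathbb{P}_h\u_0\|_{L^2}$. Collecting these gives exactly \eqref{eq:dmpr_s3}.

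I do not expect a real obstacle: the only delicate point is legitimising $A_h^{-1/2}$ and its commutation with the scheme, but both are immediate from the finite-dimensional spectral theory of $A_h$ on $\V_h$. One could alternatively avoid fractional powers by testing with $A_h^{-1}\u_{\tau h}$ and reading off an energy identity, but the spectral reduction above is cleaner and re-uses Theorem~\ref{thm: stability in for Ah} verbatim.
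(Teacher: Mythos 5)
Your reduction is correct, but it is a genuinely different route from the one taken in the paper. The paper proves Theorem~\ref{thm: stability in for grad_Ah} by a direct energy argument, mirroring the proof of Theorem~4.1 in \cite{MeidnerVexler:2008I}: it tests \eqref{eq:fully_discrete_div_free} successively with $\u_{\tau h}$ (to control $\|\na\u_{\tau h}\|_{L^2(\IOm)}$), with $(t-t_{m-1})A_h^{-1}\partial_t\u_{\tau h}$ together with the scaling inequality \eqref{Inverse_Holder_type_inequality} (to control the time-derivative term), and with $[A_h^{-1}\u_{\tau h}]_{m-1}$ (to control the jumps) --- essentially the ``avoid fractional powers'' alternative you mention at the end of your proposal. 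Your spectral-shift argument via $\tilde\u_{\tau h}=A_h^{-1/2}\u_{\tau h}$ is shorter and re-uses Theorem~\ref{thm: stability in for Ah} verbatim; the commutation identity and the norm translations $\|A_h^{1/2}\phi\|_{L^2(\Omega)}=\|\na\phi\|_{L^2(\Omega)}$, $\|A_h^{-1/2}\phi\|_{L^2(\Omega)}=\|\na A_h^{-1}\phi\|_{L^2(\Omega)}$ all check out and reproduce \eqref{eq:dmpr_s3} exactly. Two small inaccuracies are worth fixing. First, $\V_h\not\subset\V^1$: discretely divergence-free finite element functions are in general not pointwise divergence-free, so the shifted initial datum $A_h^{-1/2}\mathbb{P}_h\u_0$ does not literally satisfy the hypothesis $\u_0\in\V^1$ of Theorem~\ref{thm: stability in for Ah}. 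This is harmless because the discrete scheme and the right-hand side of \eqref{eq:dmpr_s2} depend only on $\mathbb{P}_h\u_0\in\V_h$, so the estimate of Theorem~\ref{thm: stability in for Ah} holds for any $\u_0\in L^2(\Omega)^d$; but you should say this explicitly rather than assert the false inclusion. Second, the citation of Remark~\ref{remark_on_Ph} for the fact that $A_h^s\v_{\tau h}\in X_\tau^w(\V_h)$ is misplaced --- that remark concerns replacing the data by its Leray projection; the fact you need follows directly from $A_h^s\colon\V_h\to\V_h$ applied coefficient-wise in the representation \eqref{chap:IS:eq:polynomial}. What the paper's approach buys is independence from Theorem~\ref{thm: stability in for Ah} and a template that generalizes to other weighted test functions; what yours buys is brevity and a clean conceptual explanation of why the $H^{-1}$-type estimate is just the $L^2$-type estimate shifted by half a power of $A_h$.
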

\begin{proof}
The proof goes along the lines of the  proof of Theorem 4.1 \cite{MeidnerVexler:2008I}. We will provide the proof for completeness.

Testing \eqref{eq:fully_discrete_div_free} with $\v_{\tau h}=\u_{\tau h}$ we obtain on each time interval $I_m$
$$
(\partial_t \u_{\tau h},\u_{\tau h})_{\ImOm}+\|\na \u_{\tau h}\|^2_{L^2(\ImOm)}+([\u_{\tau h}]_{m-1},\u_{\tau h,m-1}^+)_{\Omega}=(\f,\u_{\tau h})_{\ImOm}.
$$
Using the identities
$$
(\partial_t \u_{\tau h},\u_{\tau h})_{\ImOm} = \frac{1}{2}\| \u_{\tau h,m}^-\|^2_{L^2(\Om)}-\frac{1}{2}\| \u_{\tau h,m-1}^+\|^2_{L^2(\Om)}
$$
and
$$
([\u_{\tau h}]_{m-1},\u_{\tau h,m-1}^+)_{\Omega}= \frac{1}{2}\| \u_{\tau h,m-1}^+\|^2_{L^2(\Om)}+\frac{1}{2}\| [\u_{\tau h}]_{m-1}\|^2_{L^2(\Om)}-\frac{1}{2}\| \u_{\tau h,m-1}^-\|^2_{L^2(\Om)}
$$
gives us
$$
\frac{1}{2}\| \u_{\tau h,m}^-\|^2_{L^2(\Om)}-\frac{1}{2}\| \u_{\tau h,m-1}^-\|^2_{L^2(\Om)}+\frac{1}{2}\| [\u_{\tau h}]_{m-1}\|^2_{L^2(\Om)}+\|\na \u_{\tau h}\|^2_{L^2(\ImOm)}=(\f,\u_{\tau h})_{\ImOm}.
$$
Summing over $m$, we obtain
$$
\frac{1}{2}\| \u_{\tau h,M}^-\|^2_{L^2(\Om)}+\frac{1}{2}\sum_{m=1}^M\| [\u_{\tau h}]_{m-1}\|^2_{L^2(\Om)}+\|\na \u_{\tau h}\|^2_{L^2(\IOm)}=(\f,\u_{\tau h})_{\IOm}+\frac{1}{2}\| \mathbb{P}_h\u_0\|^2_{L^2(\Om)}.
$$
To treat the term involving $\f$, we write it as
$$
(\f,\u_{\tau h})_{\IOm}= (\mathbb{P}_h\f,\u_{\tau h})_{\IOm}=(A_hA_h^{-1}\mathbb{P}_h\f,\u_{\tau h})_{\IOm}=(\na A_h^{-1}\mathbb{P}_h\f,\na \u_{\tau h})_{\IOm},
$$
where in the last step we used that $\u_{\tau h}$ is discrete divergence free. 
Using the Cauchy-Schwarz  and geometric-arithmetic mean inequalities
$$
(\f,\u_{\tau h})_{\IOm} \le \frac{1}{2}\|\na \u_{\tau h}\|^2_{L^2(\IOm)}+\frac{1}{2}\|\na A_h^{-1}\mathbb{P}_h\f\|^2_{L^2(\IOm)}.
$$
Thus, we obtain
\begin{equation}\label{eq: estimate for grad}
\|\na \u_{\tau h}\|^2_{L^2(\IOm)}\le \|\na A_h^{-1} \mathbb{P}_h\f\|^2_{L^2(\IOm)}+\|\mathbb{P}_h\u_0\|^2_{L^2(\Om)}.
\end{equation}
Next, we test \eqref{eq:fully_discrete_div_free}   with $\v_{\tau h}\mid_{I_m}=(t-t_{m-1})A_h^{-1} \partial_t \u_{\tau h}$. Noticing that  the jump terms disappear and using the identity,
$$
(\partial_t \u_{\tau h}, A_h^{-1} \partial_t u_{kh})_{\Om}=(A_h A_h^{-1} \partial_t \u_{\tau h}, A_h^{-1} \partial_t \u_{\tau h})_{\Om}=\|\na A_h^{-1} \partial_t \u_{\tau h}\|^2_{L^2(\Om)},
$$
 on each time interval $I_m$, which follows from $\partial_t \u_{\tau h}$ being discretely divergence free for each $t\in I_m$, we obtain
$$
\begin{aligned}
\int_{I_m}(t-t_{m-1})\|\na A_h^{-1} \partial_t \u_{\tau h}\|^2_{L^2(\Om)}dt=&\int_{I_m}(t-t_{m-1})(\na \u_{\tau h}, \na A_h^{-1} \partial_t \u_{\tau h})_{\Om}dt\\
&+\int_{I_m}(t-t_{m-1})(\f,A_h^{-1} \partial_t \u_{\tau h})_{\Om}dt\\
=&\int_{I_m}(t-t_{m-1})(\na \u_{\tau h}+\na A_h^{-1}\mathbb{P}_h \f, \na A_h^{-1} \partial_t \u_{\tau h})_{\Om}dt.
\end{aligned}
$$
Using the Cauchy-Schwarz inequality
$$
\begin{aligned}
\int_{I_m}(t-t_{m-1})(\na \u_{\tau h}+\na A_h^{-1}\mathbb{P}_h \f, \na A_h^{-1} \partial_t \u_{\tau h}&)_{\Om}dt
\le  \left(\int_{I_m}(t-t_{m-1})\|\na A_h^{-1} \partial_t \u_{\tau h}\|^2_{L^2(\Om)}dt\right)^{1/2}\\
&
\left(\int_{I_m}(t-t_{m-1})\|\na \u_{\tau h}+\na A_h^{-1}\mathbb{P}_h \f\|^2_{L^2(\Om)}dt\right)^{1/2}.
\end{aligned}
$$
Hence, canceling we have
$$
\int_{I_m}(t-t_{m-1})\|\na A_h^{-1} \partial_t \u_{\tau h}\|^2_{L^2(\Om)}dt\le \int_{I_m}(t-t_{m-1})\|\na \u_{\tau h}+\na A_h^{-1}\mathbb{P}_h \f\|^2_{L^2(\Om)}dt.
$$
Using the following inequality 
\begin{equation}\label{Inverse_Holder_type_inequality}
\int_{I_m}\|v_{k}\|^2dt\le C\tau_m^{-1}\int_{I_m}(t-t_{m-1})\|v_{k}\|^2dt \quad \forall v_k\in \Ppol{q}(I_m;\V_h),
\end{equation}
which can be easily verified by a usual scaling argument,
we obtain
$$
\begin{aligned}
\int_{I_m}\|\na A_h^{-1} \partial_t \u_{\tau h}\|^2_{L^2(\Om)}dt&\le C\tau_{m}^{-1}\int_{I_m}(t-t_{m-1})\|\na A_h^{-1} \partial_t \u_{\tau h}\|^2_{L^2(\Om)}dt\\
&\le C\tau_m^{-1}\int_{I_m}(t-t_{m-1})\|\na \u_{\tau h}+\na A_h^{-1}\mathbb{P}_h \f\|^2_{L^2(\Om)}dt\\
&\le C\int_{I_m}\|\na \u_{\tau h}+\na A_h^{-1}\mathbb{P}_h \f\|^2_{L^2(\Om)}dt.
\end{aligned}
$$
Summing over $m$ and using the estimate for the gradient \eqref{eq: estimate for grad}, we obtain
\begin{equation}\label{eq: estimate for partial_t Delta}
\sum_{m=1}^M\|\na A_h^{-1}\pa_t \u_{\tau h}\|^2_{L^2(\ImOm)}\le C\left( \|\na A_h^{-1} \mathbb{P}_h\f\|^2_{L^2(\IOm)}+\|\mathbb{P}_h\u_0\|^2_{L^2(\Om)}\right).
\end{equation}
It remains to estimate the jump terms. We test \eqref{eq:fully_discrete_div_free}  with $\v_{\tau h}\mid_{I_m}= [A_h^{-1}\u_{kh}]_{m-1}$.
On each time interval $I_m$ this time we obtain,
$$
\|\na  [A_h^{-1}\u_{\tau h}]_{m-1}\|^2_{L^2(\Om)}=(\f+A_h \u_{\tau h}+\partial_t \u_{\tau h}, [A_h^{-1} \u_{\tau h}]_{m-1})_{\ImOm}.
$$
Integrating by parts and using the Cauchy-Schwarz  and geometric-arithmetic mean inequalities
$$
\begin{aligned}
(&\f+A_h \u_{\tau h}+\partial_t \u_{\tau h}, [A_h^{-1} \u_{\tau h}]_{m-1})_{\ImOm}\\
&=(\na A_h^{-1}\mathbb{P}_h\f+\na \u_{\tau h}+\na A_h^{-1}\partial_t \u_{\tau h},[\na A_h^{-1}\u_{\tau h}]_{m-1})_{\ImOm}\\
&\le \|\na A_h^{-1}\mathbb{P}_hf+\na \u_{\tau h}+\na A_h^{-1}\partial_t \u_{\tau h}\|_{L^2(\ImOm)}\|[\na A_h^{-1} \u_{\tau h}]_{m-1}\|_{L^2(\ImOm)}\\
&\le \frac{\tau_m}{2}\|\na A_h^{-1}\mathbb{P}_h\f+\na \u_{\tau h}+\na A_h^{-1}\partial_t \u_{\tau h}\|^2_{L^2(\ImOm)}+\frac{1}{2\tau_m}\|[\na A_h^{-1} \u_{\tau h}]_{m-1}\|^2_{L^2(\ImOm)}\\
&= \frac{\tau_m}{2}\|\na A_h^{-1}\mathbb{P}_hf+\na \u_{\tau h}+\na A_h^{-1}\partial_t \u_{\tau h}\|^2_{L^2(\ImOm)}+\frac{1}{2}\|[\na A_h^{-1} \u_{\tau h}]_{m-1}\|^2_{L^2(\Om)},
\end{aligned}
$$
where we used that $[\na A_h^{-1}\u_{\tau h}]_{m-1}$ is constant in time on $I_m$.
Thus, we obtain
$$
\tau_m^{-1}\|[\na A_h^{-1} \u_{\tau h}]_{m-1}\|^2_{L^2(\Om)}\le \|\na A_h^{-1}\mathbb{P}_h\f+\na \u_{\tau h}+\na A_h^{-1}\partial_t \u_{\tau h}\|^2_{L^2(\ImOm)}.
$$
Summing over $m$ and
 using \eqref{eq: estimate for grad} and \eqref{eq: estimate for partial_t Delta}
\begin{equation}\label{eq: estimates for jumps}
\sum_{m=1}^M\tau_m^{-1}\|[\na A_h^{-1} \u_{\tau h}]_{m-1}\|^2_{L^2(\Om)}\le C\left( \|\na A_h^{-1} \mathbb{P}_h\f\|^2_{L^2(\IOm)}+\|\mathbb{P}_h\u_0\|^2_{L^2(\Om)}\right).
\end{equation}
Combining \eqref{eq: estimate for grad}, \eqref{eq: estimate for partial_t Delta}, and \eqref{eq: estimates for jumps}, we obtain the result.
\end{proof}

%%%%%%%%%%%%%%%%%%%%%%%%%%%%%%%%%%%%%%%%%%%%%%%%%%%%%%%%%%%%%%%%%%%%%%
\section{Stationary Stokes results }\label{sec:stokes}
For the best approximation type error estimates in the next section 
, we need to review basic results on  stationery Stokes projection and on discrete Leray  projection. 

\subsection{Discrete Stokes projection}
 In addition, we need to introduce an analogue of the Ritz projection for the stationary Stokes problem  $(R_h^S(\w,\varphi), R_h^{S,p}(\w,\varphi))\in \X_h \times M_h$ of $(\w,\varphi)\in H^1_0(\Omega)^d\times L^2(\Omega)$ given by the relation \sbox0{\ref{chap:IS:eq:stokes_projection_1,chap:IS:eq:stokes_projection_2}}
\begin{equation} \label{chap:IS:eq:stokes_projection}
\begin{aligned}
    (\nabla (\w - R_h^S(\w,\varphi)),\nabla \v_h) - (\varphi-R_h^{S,p}(\w,\varphi), \nabla \cdot \v_h) &= 0, \qquad \forall \v_h \in \X_h,\\
    (\nabla \cdot (\w - R_h^S(\w,\varphi)), q_h) &=0, \qquad \forall q_h \in M_h.
\end{aligned}
\end{equation}
\begin{remark}\label{remark:RitzStokesProjection}
If $\w$ is discrete divergence free, i.e., $(\nabla\cdot\w,q_h)=0$ for all $q_h\in M_h$, then we have $R_h^S(\w,\varphi) \in \V_h$. We will use this projection operator only for such $\w$. In this case the same projection operator is defined, e.g., in~\cite{Girault:2015}.
\end{remark}

\begin{proposition}\label{Stability_Stokes}
The Stokes projection is stable in $H^1$ norm, i.e.
$$
\|\na R_h^S(\w,\varphi)\|_{L^2(\Om)}\le C\left(\|\na\w\|_{L^2(\Om)}+\|\varphi\|_{L^2(\Om)}\right).
$$
and if $\u\in H^2(\Om)^d$ and $p\in H^1(\Om)$, the following error estimate holds
	\[
	\norm{\na (\u - R_h^S (\u,p))}_{L^2(\Omega)} \le C h \left(\norm{\nabla^2 \u}_{L^2(\Omega)} + \norm{\nabla p}_{L^2(\Omega)} \right).
	\]
\end{proposition}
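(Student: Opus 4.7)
The plan is to recognize the relations \eqref{chap:IS:eq:stokes_projection} as the mixed finite element discretization of a stationary Stokes saddle-point problem whose right-hand side is determined by $(\w,\varphi)$, and then to invoke the classical Babu\v{s}ka--Brezzi theory. This framework applies because the form $(\nabla\cdot,\nabla\cdot)$ is coercive on $\X_h\subset H_0^1(\Om)^d$ by Poincar\'e's inequality, and the uniform discrete inf-sup condition \eqref{chap02:eq:discrete_infsup} is built into the choice of the pair $(\X_h,M_h)$.

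For the first claim, I would rewrite \eqref{chap:IS:eq:stokes_projection} as a discrete Stokes system for $(R_h^S(\w,\varphi),R_h^{S,p}(\w,\varphi))$ whose momentum right-hand side $(\nabla\w,\nabla\v_h)-(\varphi,\nabla\cdot\v_h)$ is controlled by $\bigl(\|\nabla\w\|_{L^2(\Om)}+\|\varphi\|_{L^2(\Om)}\bigr)\|\nabla\v_h\|_{L^2(\Om)}$ and whose continuity right-hand side $(\nabla\cdot\w,q_h)$ is controlled by $\|\nabla\w\|_{L^2(\Om)}\|q_h\|_{L^2(\Om)}$. Standard mixed FEM stability then produces
\[
\|\nabla R_h^S(\w,\varphi)\|_{L^2(\Om)} + \|R_h^{S,p}(\w,\varphi)\|_{L^2(\Om)} \le C\bigl(\|\nabla \w\|_{L^2(\Om)} + \|\varphi\|_{L^2(\Om)}\bigr),
\]
with $C$ depending only on $\beta$ and $\Om$, which gives the $H^1$-stability bound in particular.

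For the error estimate, the same framework delivers the quasi-best approximation property
\[
\|\nabla(\u - R_h^S(\u,p))\|_{L^2(\Om)} + \|p - R_h^{S,p}(\u,p)\|_{L^2(\Om)} \le C \inf_{\v_h\in \X_h,\, q_h\in M_h}\!\bigl(\|\nabla(\u - \v_h)\|_{L^2(\Om)} + \|p - q_h\|_{L^2(\Om)}\bigr),
\]
which I would derive by subtracting the projection equations from their trivially valid continuous counterparts and applying the previous stability bound to the difference $(R_h^S(\u,p)-\v_h,\,R_h^{S,p}(\u,p)-q_h)$. Taking $\v_h$ to be a Scott--Zhang interpolant of $\u\in H^2(\Om)^d\cap H_0^1(\Om)^d$ and $q_h$ a standard quasi-interpolant in $M_h$ of $p$ with its mean removed (to which the projection is insensitive, since $(\nabla\cdot\v_h,1)=0$ for $\v_h\in\X_h\subset H_0^1(\Om)^d$) produces $\|\nabla(\u-\v_h)\|_{L^2(\Om)}\le Ch\|\nabla^2\u\|_{L^2(\Om)}$ and $\|p-q_h\|_{L^2(\Om)}\le Ch\|\nabla p\|_{L^2(\Om)}$, yielding the stated bound. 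No serious obstacle arises: the entire argument reduces to invoking \eqref{chap02:eq:discrete_infsup} inside the abstract saddle-point theory, with the only minor subtlety being the constant-shift invariance in $\varphi$ so that the $L_0^2$-constraint on $M_h$ does not obstruct the choice of $q_h$.
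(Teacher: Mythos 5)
Your proposal is correct and coincides with the argument behind the paper's proof: the paper does not prove this proposition itself but cites \cite{2004Ern}, Propositions 4.14 and 4.18, which establish exactly the stability and quasi-best-approximation bounds via the Babu\v{s}ka--Brezzi saddle-point theory combined with standard interpolation estimates, as you describe. Your handling of the mean-value normalization of the pressure is the only nonobvious detail, and you treat it correctly.
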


\begin{proposition}\label{prop: L2 estimate}
	If $\Om$ is convex, then
	\[
	\norm{\u - R_h^S (\u,p)}_{L^2(\Omega)} \le C h \left(\norm{\nabla \u}_{L^2(\Omega)} + \norm{ p}_{L^2(\Omega)} \right)
	\]
	and
	\[
	\norm{\u - R_h^S (\u,p)}_{L^2(\Omega)} \le C h^2 \left(\norm{\nabla^2 \u}_{L^2(\Omega)} + \norm{\nabla p}_{L^2(\Omega)} \right).
	\]
\end{proposition}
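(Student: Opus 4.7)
The plan is a standard Aubin--Nitsche duality argument that exploits the $H^2$-regularity of the stationary Stokes problem on convex polyhedral domains. Denote $\vec{e} := \u - R_h^S(\u,p)$ and $e^p := p - R_h^{S,p}(\u,p)$. I would consider the dual stationary Stokes problem: find $(\vec{w},\varphi)$ with
\[
-\Delta \vec{w} + \nabla\varphi = \vec{e},\qquad \nabla\cdot\vec{w} = 0\ \text{in }\Omega,\qquad \vec{w}=\vec{0}\ \text{on }\partial\Omega.
\]
Since $\Omega$ is convex, the analogue of \eqref{eq: H2 regularity A} (or the stationary version of Corollary~\ref{cor:Omega_convex_2}) yields $\vec{w}\in\V^2$, $\varphi\in H^1(\Omega)\cap L^2_0(\Omega)$ and
$\|\vec{w}\|_{H^2(\Omega)} + \|\varphi\|_{H^1(\Omega)}\le C\|\vec{e}\|_{L^2(\Omega)}$.

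Next I would test the dual equation against $\vec{e}$ and integrate by parts. Since $\vec{e}|_{\partial\Omega}=\vec 0$ no boundary contributions arise, so
\[
\|\vec{e}\|_{L^2(\Omega)}^2 = (\nabla\vec{e},\nabla\vec{w}) - (\nabla\cdot\vec{e},\varphi).
\]
For any $(\vec{v}_h,q_h)\in\X_h\times M_h$, the Galerkin orthogonality \eqref{chap:IS:eq:stokes_projection} together with $\nabla\cdot\vec{w}=0$ (which lets me rewrite $(e^p,\nabla\cdot\vec{v}_h)=-(e^p,\nabla\cdot(\vec{w}-\vec{v}_h))$) gives
\[
\|\vec{e}\|_{L^2(\Omega)}^2 = (\nabla\vec{e},\nabla(\vec{w}-\vec{v}_h)) - (e^p,\nabla\cdot(\vec{w}-\vec{v}_h)) - (\nabla\cdot\vec{e},\varphi-q_h).
\]

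The natural choice is $\vec{v}_h := R_h^S(\vec{w},\varphi)\in\V_h$ (legitimate by Remark~\ref{remark:RitzStokesProjection} since $\vec{w}$ is divergence free) and $q_h$ an $L^2$-projection or Cl\'ement-type interpolant of $\varphi$ into $M_h$. The error estimate in Proposition~\ref{Stability_Stokes} combined with the dual regularity bound gives
\[
\|\nabla(\vec{w}-\vec{v}_h)\|_{L^2(\Omega)}\le Ch\bigl(\|\nabla^2\vec{w}\|_{L^2(\Omega)}+\|\nabla\varphi\|_{L^2(\Omega)}\bigr)\le Ch\|\vec{e}\|_{L^2(\Omega)},
\]
and $\|\varphi-q_h\|_{L^2(\Omega)}\le Ch\|\nabla\varphi\|_{L^2(\Omega)}\le Ch\|\vec{e}\|_{L^2(\Omega)}$. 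After Cauchy--Schwarz and dividing through by $\|\vec{e}\|_{L^2(\Omega)}$ I obtain
\[
\|\vec{e}\|_{L^2(\Omega)} \le Ch\bigl(\|\nabla\vec{e}\|_{L^2(\Omega)}+\|e^p\|_{L^2(\Omega)}\bigr).
\]
The first claimed bound then follows from the $H^1$ stability half of Proposition~\ref{Stability_Stokes} and its pressure analogue (with a triangle inequality), while the second claimed bound follows by plugging in the $H^1$ \emph{convergence} estimate from Proposition~\ref{Stability_Stokes} together with the corresponding $O(h)$ pressure error bound $\|e^p\|\le Ch(\|\nabla^2\u\|+\|\nabla p\|)$.

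The principal obstacle — and really the only nontrivial input beyond what is already quoted — is securing the pressure bounds $\|e^p\|_{L^2(\Omega)}$ of the same order as $\|\nabla\vec{e}\|_{L^2(\Omega)}$ on the right-hand side of the duality estimate. Both follow from a standard discrete inf-sup argument applied to the first line of \eqref{chap:IS:eq:stokes_projection}, but should be noted explicitly in the write-up since the statement of Proposition~\ref{Stability_Stokes} records only the velocity estimates.
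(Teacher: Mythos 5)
Your duality argument is correct and is essentially the standard Aubin--Nitsche proof that the paper itself does not reproduce but delegates to \cite{2004Ern} (Propositions 4.14 and 4.18); the choice $\v_h=R_h^S(\vec w,\varphi)$, the use of both orthogonality relations in \eqref{chap:IS:eq:stokes_projection}, and your explicit flagging of the pressure error bounds via the inf-sup condition \eqref{chap02:eq:discrete_infsup} are all the right ingredients. The only implicit hypothesis worth recording is a first-order approximation property of $M_h$ (needed for $\norm{\varphi-q_h}_{L^2(\Omega)}\le Ch\norm{\nabla\varphi}_{L^2(\Omega)}$ and for the $O(h)$ pressure bound), which is standard for inf-sup stable pairs and is assumed in the cited reference.
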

 For the proofs of the above results we refer to \cite{2004Ern}, Propositions 4.14 and 4.18.

\subsection{Discrete Leray projection}
We need the restriction on the mesh and $H^2$ regularity such that the Leray projection $\mathbb{P}$ is stable in $H^1(\Om)$ norm. Such result is shown for  quasi-uniform meshes in \cite{Chrysafinos_Walkington_2010}.
\begin{lemma}[Stability of the Leray projection in $H^1$ norm]\label{lem: stability Leray in H1}
Let $\Om$ be convex and the triangulations $\{\Th\}$ be quasi-uniform. Then, there exist a constant $C$ such that
$$
\|\na \mathbb{P}_h \v\|_{L^2(\Om)}\le C\|\na \v\|_{L^2(\Om)}\quad \forall \v\in \V^1.
$$
\end{lemma}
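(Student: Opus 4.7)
The plan is to reduce the $H^1$ stability of $\mathbb{P}_h$ to the known stability and $L^2$ error estimate of the Stokes projection $R_h^S$ from Propositions~\ref{Stability_Stokes} and \ref{prop: L2 estimate}, combined with an inverse inequality that is permitted because the mesh is assumed quasi-uniform.

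Since $\v \in \V^1$ we have $\nabla\cdot\v=0$, so in particular $(\nabla\cdot\v,q_h)=0$ for all $q_h\in M_h$. By Remark~\ref{remark:RitzStokesProjection} this means $R_h^S(\v,0)\in \V_h$. The key decomposition is then
\[
\mathbb{P}_h\v \;=\; R_h^S(\v,0) \;+\; \mathbb{P}_h\bigl(\v - R_h^S(\v,0)\bigr),
\]
which is valid because $\mathbb{P}_h R_h^S(\v,0) = R_h^S(\v,0)$ (as $R_h^S(\v,0)\in\V_h$), so that $\mathbb{P}_h\v - R_h^S(\v,0) = \mathbb{P}_h(\v-R_h^S(\v,0))$.

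For the first term I would simply invoke the $H^1$ stability of the Stokes projection from Proposition~\ref{Stability_Stokes} (with $\varphi=0$) to get $\|\nabla R_h^S(\v,0)\|_{L^2(\Om)} \le C\|\nabla \v\|_{L^2(\Om)}$. For the second term I would use the inverse inequality $\|\nabla \w_h\|_{L^2(\Om)} \le Ch^{-1}\|\w_h\|_{L^2(\Om)}$ (available for $\w_h\in\X_h$ thanks to quasi-uniformity), together with the $L^2$ stability of $\mathbb{P}_h$, to obtain
\[
\bigl\|\nabla \mathbb{P}_h\bigl(\v-R_h^S(\v,0)\bigr)\bigr\|_{L^2(\Om)} \le Ch^{-1}\bigl\|\mathbb{P}_h\bigl(\v-R_h^S(\v,0)\bigr)\bigr\|_{L^2(\Om)} \le Ch^{-1}\bigl\|\v-R_h^S(\v,0)\bigr\|_{L^2(\Om)}.
\]
Applying the first estimate of Proposition~\ref{prop: L2 estimate} with $p=0$ (which crucially uses convexity of $\Om$) yields $\|\v-R_h^S(\v,0)\|_{L^2(\Om)} \le Ch\|\nabla\v\|_{L^2(\Om)}$, and the $h^{-1}$ and $h$ factors cancel.

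Combining both contributions via the triangle inequality gives $\|\nabla \mathbb{P}_h\v\|_{L^2(\Om)} \le C\|\nabla\v\|_{L^2(\Om)}$, as required. The only real content is the interplay between the $L^2$-error estimate for $R_h^S$ (which costs one power of $h$ and exploits convexity) and the inverse inequality (which costs one inverse power of $h$ and exploits quasi-uniformity); no single step is difficult, and the argument is essentially the classical one for such a ``super-approximation $+$ inverse estimate'' bound, so I expect no serious obstacle beyond keeping track of the hypotheses on $\Om$ and $\{\Th\}$.
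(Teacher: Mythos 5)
Your argument is correct and is essentially identical to the paper's own proof (which itself follows the appendix of \cite{Chrysafinos_Walkington_2010}): both set $\v_h=R_h^S(\v,0)\in\V_h$, use the identity $\mathbb{P}_h\v-\v_h=\mathbb{P}_h(\v-\v_h)$, and combine the $H^1$ stability of $R_h^S$, the inverse inequality on quasi-uniform meshes, the $L^2$ stability of $\mathbb{P}_h$, and the first-order $L^2$ error estimate of Proposition~\ref{prop: L2 estimate} on the convex domain. No discrepancies to report.
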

\begin{proof}
The proof of this results is given in the Appendix of \cite{Chrysafinos_Walkington_2010}. For the completeness, we repeat the argument here. Let $\v\in \V^1$ and consider $\v_h=R_h^S (\v,0)\in \V_h$. Then by Proposition \ref{Stability_Stokes}
$$
\|\na \v_h\|_{L^2(\Om)}\le C\|\na \v\|_{L^2(\Om)}+\|0\|_{L^2(\Om)}=C\|\na \v\|_{L^2(\Om)}.
$$
On the other hand by Proposition \ref{prop: L2 estimate},
$$
\|\v- \v_h\|_{L^2(\Om)}\le Ch\|\na \v\|_{L^2(\Om)}.
$$
As a result, using the inverse and the above two estimates, we obtain
$$
\begin{aligned}
\|\na \mathbb{P}_h \v\|_{L^2(\Om)}&\le \|\na (\mathbb{P}_h \v-\v_h)\|_{L^2(\Om)}+ \|\na \v_h\|_{L^2(\Om)}\\
&\le Ch^{-1}\|\mathbb{P}_h \v-\v_h\|_{L^2(\Om)}+ C\|\na \v\|_{L^2(\Om)}\\
&\le Ch^{-1}\|\mathbb{P}_h (\v-\v_h)\|_{L^2(\Om)}+ C\|\na \v\|_{L^2(\Om)}\\
&\le Ch^{-1}\|\v-\v_h\|_{L^2(\Om)}+ C\|\na \v\|_{L^2(\Om)}\\
&\le C\|\na\v\|_{L^2(\Om)}.
\end{aligned}
$$
\end{proof}

\begin{remark}
The stability of the discrete Leray projection is a delicate matter. 
As operator, $\mathbb{P}_h$ is well-defined for $L^2(\Om)^d$ and even $L^1(\Om)^d$ functions. In such a case, the best stability result can be obtained in fractional norms (cf. \cite[~Lemma~3.1]{Guermond_2008}
\begin{equation}   \label{eq: stability Leray}
\|\mathbb{P}_h \v\|_{H^s(\Om)}\le C\| \v\|_{H^s(\Om)}\quad \forall s\in[0,\frac{1}{2}),\quad \forall \v\in H^s(\Om)^d,
\end{equation}
and can not be extended to any $s\geq 1/2$ (cf. \cite[~Remark~3.1]{Guermond_2008}.
\end{remark}

\begin{remark}
The stability of the Leray projection \eqref{eq: stability Leray} is the only result that requires restriction on a mesh.
\end{remark}

\section{Main results}  \label{sec: main results}

Now we state our main best approximation results.

\subsection{$L^2$ error estimates}

The first result establishes the  optimal error estimates $L^2(\IOm)$ norm.

\begin{theorem}\label{chap:IS:theorem:L2_best_approximation}
	Let $\f \in L^2(I;L^2(\Omega)^d)$  and $\u_0 \in \V^1$.
    Let $(\u,p)$ be the solution of \eqref{eq: weak stokes with pressure} and $(\u_{{\tau}h},p_{\tau h})$ solve the respective finite element problem \eqref{eq:spacetime_discretization}. Then, for any $\vec{\chi}\in X^w_{\tau}(\V_h)$, there holds
    \begin{equation}
	\norm{\u -\u_{{\tau}h}}_{L^2(\IOm)} \leq C \Big(\norm{\u-\vec{\chi}}_{L^2(\IOm)} + \norm{\u-\pi_\tau\u}_{L^2(\IOm)}+ \norm{\u - R_h^S \u}_{L^2(\IOm)}\Big).
    \end{equation}
\end{theorem}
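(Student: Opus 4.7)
The plan is to prove the estimate by an Aubin--Nitsche style duality argument based on the backward dual Stokes problem. First I would split the error as $\u - \u_{\tau h} = (\u - \vec{\chi}) + (\vec{\chi} - \u_{\tau h})$ for the fixed but arbitrary $\vec{\chi} \in X^w_{\tau}(\V_h)$. The first summand is already of the form appearing on the right-hand side of the theorem, so the remaining task is to estimate $\norm{\vec{\chi} - \u_{\tau h}}_{L^2(\IOm)}$ by dualizing against an arbitrary $\g \in L^2(\IOm)^d$ of unit norm.

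For such a $\g$ I would consider the backward dual Stokes problem
\[
-\partial_t \z - \Delta \z + \nabla r = \g, \qquad \nabla\cdot \z = 0, \qquad \z(T) = 0.
\]
Under the time reversal $t \mapsto T-t$ this reduces to a forward Stokes problem with right-hand side $\g(T-\cdot) \in L^2(I;L^2(\Omega)^d)$ and zero initial datum in $\V^1$, so Theorem \ref{theorem:weak_with_preasure} yields a unique solution $(\z, r)$ satisfying
\[
\norm{\partial_t \z}_{L^2(I;L^2(\Om))} + \norm{A\z}_{L^2(I;L^2(\Om))} + \norm{r}_{L^2(I;L^2(\Om))} \le C\norm{\g}_{L^2(\IOm)}.
\]
Multiplying this dual equation by $\u - \u_{\tau h}$, integrating by parts in time on each subinterval (using that $\z$ is continuous with $\z(T) = 0$ and $\u$ is continuous with $\u(0) = \u_0$) and in space, I would express $(\u - \u_{\tau h}, \g)_{\IOm}$ as $B\bigl((\u - \u_{\tau h}, p - p_{\tau h}), (\z, r)\bigr)$ modulo a correction accounting for the fact that $\u_{\tau h}$ is only discretely divergence-free. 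By the Galerkin orthogonality of Proposition \ref{prop:Galerkin_orthogonality}, the right-hand side is unchanged if $(\z, r)$ is replaced by $(\z - \tilde{\z}_{\tau h}, r - \tilde{r}_{\tau h})$ for any discrete pair $(\tilde{\z}_{\tau h}, \tilde{r}_{\tau h}) \in X^w_\tau(\X_h \times M_h)$.

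The decisive choice is $\tilde{\z}_{\tau h} = \pi_\tau R_h^S(\z, r)$ paired with the analogous projection $\tilde r_{\tau h}$ of $r$ into $X^w_\tau(M_h)$. Decomposing $\z - \tilde{\z}_{\tau h} = (\z - R_h^S\z) + (I - \pi_\tau)R_h^S\z$, Proposition \ref{prop: L2 estimate} and the approximation property \eqref{eq: approximation of pi_k} give the bounds $\norm{\z - R_h^S \z}_{L^2(\IOm)} \le Ch^2 \norm{A\z}_{L^2(\IOm)}$ and $\norm{(I-\pi_\tau)R_h^S \z}_{L^2(\IOm)} \le C\tau \norm{\partial_t \z}_{L^2(\IOm)}$, both ultimately controlled by $\norm{\g}_{L^2(\IOm)}$ through the dual regularity above. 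Within the expansion of $B$ I would then reintroduce the primal splitting $\u - \u_{\tau h} = (\u - \vec{\chi}) + (\vec{\chi} - \u_{\tau h})$: the $(\u - \vec{\chi})$ contributions pair directly against the dual data through Cauchy--Schwarz and produce the first norm on the right-hand side, while the purely discrete residual $(\vec{\chi} - \u_{\tau h}) \in X^w_\tau(\V_h)$ is absorbed via the discrete stability bounds of Theorem \ref{thm: stability in for Ah} applied to the auxiliary problem it solves. The second and third terms $\norm{\u - \pi_\tau \u}$ and $\norm{\u - R_h^S \u}$ arise because the difference $\u - \pi_\tau R_h^S \u$ is controlled by $\norm{\u - \pi_\tau \u} + C\norm{\u - R_h^S \u}$, so that these projection errors of $\u$ appear in the primal side of the Cauchy--Schwarz pairing.

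The principal obstacle is the pressure and divergence-defect bookkeeping. Because $\u$ is pointwise divergence-free but $\u_{\tau h}$ is divergence-free only against pressures in $M_h$, the terms $(p - p_{\tau h}, \nabla\cdot \z)_{\IOm}$ and $(\nabla\cdot(\u - \u_{\tau h}), r)_{\IOm}$ produced by $B$ do not vanish outright: they must be recast as $(\nabla\cdot \u_{\tau h}, r - q_h)_{\IOm}$ using discrete divergence-freeness of $\u_{\tau h}$, and bounded via the $L^2$-approximation of $r$ by an appropriate $q_h \in X^w_\tau(M_h)$ together with the inf-sup condition \eqref{chap02:eq:discrete_infsup} (or Lemma \ref{lemma:stationary_existence_pressure}). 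Coordinating this pressure accounting with the dG jump terms $[\u_{\tau h}]_{m-1}$ while preserving the clean three-norm structure on the right-hand side is the delicate technical core of the proof; the weighted jump control $\sum_m \tau_m \norm{\tau_m^{-1}[\u_{\tau h}]_{m-1}}^2$ from Theorem \ref{thm: stability in for Ah} is the essential tool that permits absorption of these contributions.
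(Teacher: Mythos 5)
Your overall strategy---duality plus Galerkin orthogonality---points in the right direction, but the specific route through the \emph{continuous} dual problem has a genuine gap: it reproduces exactly the ``mixed terms'' this theorem is designed to eliminate. After you replace $(\z,r)$ by $(\z-\tilde{\z}_{\tau h},\,r-\tilde{r}_{\tau h})$, the bilinear form $B$ still contains the term $(\nabla(\u-\u_{\tau h}),\nabla(\z-\tilde{\z}_{\tau h}))_{\IOm}$. Cauchy--Schwarz here pairs gradients with gradients, so with $\tilde{\z}_{\tau h}=\pi_\tau R_h^S(\z,r)$ you obtain a bound of the form $\norm{\nabla(\u-\u_{\tau h})}_{L^2(\IOm)}$ times positive powers of $h$ and $\tau$ times dual regularity norms. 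An $H^1$-norm of the error is not controlled by the three $L^2$-norms on the right-hand side of the theorem; this is precisely the structure of the Chrysafinos--Walkington estimate that the paper sets out to improve. A similar problem occurs in the time-derivative term, where $\norm{\partial_t(\z-\tilde{\z}_{\tau h})}_{L^2(\IOm)}$ is only bounded by $C\norm{\partial_t \z}_{L^2(\IOm)}\le C\norm{\g}_{L^2(\IOm)}$ with a non-small constant, so the resulting contribution $C\norm{\u-\u_{\tau h}}_{L^2(\IOm)}\norm{\g}_{L^2(\IOm)}$ cannot be absorbed. Your appeal to Theorem \ref{thm: stability in for Ah} ``applied to the auxiliary problem the discrete residual solves'' is too vague to close either hole, and the pressure bookkeeping you identify as the core difficulty is in fact the easier part.

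The paper's proof avoids the continuous dual problem entirely. It poses the \emph{discrete} dual problem with data $\u_{\tau h}$ itself, so that $\norm{\u_{\tau h}}^2_{L^2(\IOm)}=B((\u_{\tau h},p_{\tau h}),(\g_{\tau h},\lambda_{\tau h}))=B((\u,p),(\g_{\tau h},\lambda_{\tau h}))$ by Galerkin orthogonality, and then uses the dual representation \eqref{eq:dualB} to load \emph{all} derivatives onto the discrete dual solution: the gradient and pressure terms collapse to $(R_h^S(\u,p),A_h\g_{\tau h})_{\IOm}$, the time term is $-\sum_m(\u,\partial_t\g_{\tau h})_{\ImOm}$, and the jumps are paired with $(\pi_\tau\u)_m$. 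The discrete maximal parabolic regularity of Theorem \ref{thm: stability in for Ah}, applied to $\g_{\tau h}$, bounds $\norm{A_h\g_{\tau h}}_{L^2(\IOm)}$, $\bigl(\sum_m\norm{\partial_t\g_{\tau h}}^2_{L^2(\ImOm)}\bigr)^{1/2}$ and the scaled jumps by $\norm{\u_{\tau h}}_{L^2(\IOm)}$, so every term becomes an $L^2$--$L^2$ pairing with no loss of derivatives on the primal side. The best-approximation form is then obtained not from your additive splitting but from the invariance of the scheme under the shift $\u\mapsto\u-\vec{\chi}$, $\u_{\tau h}\mapsto\u_{\tau h}-\vec{\chi}$ followed by the triangle inequality. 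This combination---discrete dual data plus discrete maximal regularity---is the idea your proposal is missing.
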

\begin{proof}
	The proof essentially follows by a duality argument and Theorem \ref{thm: stability in for Ah}.
 
     Consider the following dual  problem
    \begin{equation}
	\begin{aligned}
	    -\partial_t \g(t,\x) - \Delta \g(t,\x) + \nabla \lambda(t,\x) &=\u_{{\tau}h} , \qquad & (t,\x) \in I\times \Omega,\\
	    \nabla \cdot \g(t,\x) &= 0, \qquad &(t,\x) \in I\times \Omega,\\
	    \g(t,\x) &= 0, \qquad &(t,\x) \in I\times \partial\Omega,\\
	    \g(T,x) &= 0, \qquad & \x \in \Omega.
	\end{aligned}
    \end{equation}
     The corresponding finite element approximation $(\g_{{\tau}h}, \lambda_{{\tau}h}) \in X^w_{\tau}(\X_h\times M_h)$ is given by
    \begin{equation}
    B((\v_{{\tau}h},q_{{\tau}h}),(\g_{{\tau}h},\lambda_{{\tau}h})) = \left(\u_{\tau h} ,\v_{{\tau}h}\right)_{I \times \Omega} \qquad \forall(\v_{{\tau}h},q_{{\tau}h})\in X^w_{\tau}(\X_h\times M_h).
    \end{equation}
By the Galerkin orthogonality from \eqref{prop:Galerkin_orthogonality} we have
$$
    \begin{aligned}
	\|\u_{\tau h}\|^2_{L^2(\IOm)} 
	&= (\u_{{\tau}h}, \u_{{\tau}h})_{\IOm}
			      = B((\u_{{\tau}h},p_{{\tau}h}),(\g_{{\tau}h},\lambda_{{\tau}h})) = B((\u,p),(\g_{{\tau}h},\lambda_{{\tau}h})) \\
			      &= -\sum_{m=1}^M(\u,\partial_t\g_{{\tau}h})_{I_m \times \Omega} + (\nabla \u, \nabla \g_{{\tau}h})_{I\times \Omega} - (p, \nabla \cdot \g_{{\tau}h}) - \sum_{m=1}^M(\u_m^-,[\g_{{\tau}h}]_m)_{\Omega}\\
			      &= J_1 + J_2 + J_3 + J_4,
    \end{aligned}
$$
    where we have used the dual representation of the bilinear form $B$ from \eqref{eq:dualB}.
    In the last sum we set $\g_{{\tau}h,M+1}=0$ so that $[\g_{{\tau}h}]_M= - \g_{{\tau}h,M}$.   
    Applying the Cauchy-Schwarz inequality and Theorem \ref{thm: stability in for Ah}, we obtain   
$$
\begin{aligned}
	J_1 &\le \sum_{m=1}^M \norm{\u}_{L^2(\ImOm)} \norm{\partial_t \g_{{\tau}h}}_{L^2(\ImOm)}\\
	&\le \left(\sum_{m=1}^M\norm{\u}^2_{L^2(\ImOm)} \right)^{1/2}\left(\sum_{m=1}^M \norm{\partial_t \g_{{\tau}h}}^2_{L^2(\ImOm)}\right)^{1/2}\le C\norm{\u}_{L^2(\IOm)} \norm{\u_{\tau h}}_{L^2(\IOm)}.
	\end{aligned}
$$
To treat $J_4$, we use the projection defined in \eqref{eq: projection pi_k}, the Cauchy-Schwarz inequality and the inverse inequality for time discrete function and  to obtain
$$
\begin{aligned} 
	J_4 &=-\sum_{m=1}^M(\u_m^-,[\g_{{\tau}h}]_m)_{\Omega}=-\sum_{m=1}^M((\pi_\tau\u)_m,[\g_{{\tau}h}]_m)_{\Omega} \\
	&\le \left(\sum_{m=1}^{M} \tau_m\|(\pi_\tau\u)_m\|^2_{L^2(\Om)}\right)^{1/2}\left(\sum_{m=1}^{M} \tau^{-1}_m\|[\g_{{\tau}h}]_{m-1}\|^2_{L^2(\Om)}\right)^{1/2}\\
	&\le \left(\sum_{m=1}^{M} \tau_m\|\pi_\tau\u\|^2_{L^\infty(I_m;L^2(\Om))}\right)^{1/2}\left(\sum_{m=1}^{M} \tau_m\|\tau_m^{-1}[\g_{{\tau}h}]_{m-1}\|^2_{L^2(\Om)}\right)^{1/2}\\
	&\le C\left(\sum_{m=1}^{M} \|\pi_\tau\u\|^2_{L^2(\ImOm)}\right)^{1/2}\left(\sum_{m=1}^{M} \tau_m\|\tau_m^{-1}[\g_{{\tau}h}]_{m-1}\|^2_{L^2(\Om)}\right)^{1/2}\\
	& \le C\norm{\pi_\tau\u}_{L^2(\IOm)} \norm{\u_{\tau h}}_{L^2(\IOm)}.
\end{aligned} 
$$
    For $J_2+J_3$ we can argue by using the projection $R_h^S$ defined in \eqref{chap:IS:eq:stokes_projection}.
    Then, we have
    $$
    \begin{aligned}
	J_2 + J_3 &=(\nabla \u, \nabla \g_{{\tau}h})_{I\times \Omega} - (p,\nabla \cdot \g_{{\tau}h})_{I\times\Omega}\\
		  &=(\nabla R_h^S(\u,p), \nabla \g_{{\tau}h})_{I\times \Omega} - (R_h^{S,p}(\u,p),\nabla \cdot \g_{{\tau}h})_{I\times\Omega}\\
		  &
		  =(\nabla R_h^S(\u,p), \nabla \g_{{\tau}h})_{I\times \Omega},
    \end{aligned}
    $$
    where the last term vanishes, since $\g_{{\tau}h}$ is discretely divergence-free. Here and in what follows, the projection $(R_h^S,R_h^{S,p})$ is applied to time dependent functions $(\u,p)$ pointwise in time. Since $\nabla \cdot \u(t) = 0$ for almost all $t \in I$ we have $R_h^S(\u(t),p(t)) \in \V_h$, cf. \eqref{remark:RitzStokesProjection}.
    With this we can use the definition of the discrete Stokes operator $A_h$ resulting in 
    \begin{equation}\label{eq:est_R_h_g_h}
    \begin{aligned}
	(\nabla R_h^S(\u,p), \nabla \g_{{\tau}h})_{I\times \Omega} 
	&= (R_h^S(\u,p), A_h \g_{{\tau}h})_{I\times \Omega}\\
	&\le  \norm{R_h^S (\u,p)}_{L^2(\IOm)}\norm{A_h\g_{{\tau}h}}_{L^2(\IOm)}\\
	&\le  \left(\norm{\u}_{L^2(\IOm))} + \norm{\u - R_h^S (\u,p)}_{L^2(\IOm)}\right)\norm{\u_{\tau h}}_{L^2(\IOm)}.
    \end{aligned}
    \end{equation}
    Combining the estimates for $J_1$, $J_2$, $J_3$ and $J_4$, we conclude    
  $$
	\norm{\u_{\tau h}}_{L^2(\IOm)}
	\le C \Big(\norm{\u}_{L^2(\IOm))} + \norm{\pi_\tau\u}_{L^2(\IOm))}+  \norm{\u - R_h^S(\u,p)}_{L^2(\IOm)}\Big).
$$
Using  that the Galerkin method is invariant on $X^w_{\tau}(\V_h\times M_h)$, by replacing $\u$ and $\u_{\tau h}$ with $\u-\vec{\chi}$ and $\u_{\tau h}-\vec{\chi}$ for any $\vec{\chi}\in X^w_{\tau}(\V_h)$, and using the triangle inequality  we complete the proof of the theorem.
\end{proof}

If the exact solution is sufficiently smooth then the above result easily leads to optimal convergence rates.
\begin{corollary}\label{cor: optimal error estimate in L2}
\label{theorem:error_estimate}
Let $\Omega$ be convex, $\f \in L^2(I,L^2(\Omega)^d)$ and $\u_0 \in \V^1$. Let $(\u,p)$ be the solution of \eqref{eq: weak stokes with pressure} and $(\u_{{\tau}h},p_{\tau h})$ solve the respective finite element problem \eqref{eq:spacetime_discretization}. Then, there holds
    \begin{equation}
\norm{\u-\u_{{\tau}h}}_{L^{2}(I;L^2(\Omega))}
\leq C \left({\tau} + h^2\right) \left( \norm{\f}_{L^{2}(I;L^2(\Omega))} + \norm{\u_0}_{\V^1}\right).
\end{equation}
\end{corollary}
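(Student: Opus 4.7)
The plan is to apply the best approximation estimate of Theorem \ref{chap:IS:theorem:L2_best_approximation} and then bound each of its three right-hand-side terms by $C(\tau+h^2)\bigl(\norm{\f}_{L^2(I;L^2(\Omega))}+\norm{\u_0}_{\V^1}\bigr)$ using the additional regularity that convexity of $\Omega$ provides.

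For the temporal projection term $\norm{\u-\pi_\tau\u}_{L^2(\IOm)}$, I would sum the interval-wise estimate \eqref{eq: approximation of pi_k} over $m$ to obtain a bound by $C\tau\,\norm{\partial_t\u}_{L^2(\IOm)}$, and then invoke Theorem \ref{theorem:weak_with_preasure} to control $\norm{\partial_t\u}_{L^2(\IOm)}$ by the data. For the spatial Ritz--Stokes term $\norm{\u-R_h^S(\u,p)}_{L^2(\IOm)}$, I would apply the second estimate of Proposition \ref{prop: L2 estimate} pointwise in $t$, square, and integrate to get a bound by $Ch^2\bigl(\norm{\u}_{L^2(I;H^2(\Omega))}+\norm{p}_{L^2(I;H^1(\Omega))}\bigr)$; this is where convexity enters, via Corollary \ref{cor:Omega_convex_2}, which bounds that sum by the data norm.

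The remaining freedom is the choice of $\vec{\chi}\in X^w_{\tau}(\V_h)$ in Theorem \ref{chap:IS:theorem:L2_best_approximation}. Since $\nabla\cdot\u(t)=0$ for a.e.\ $t\in I$, Remark \ref{remark:RitzStokesProjection} gives $R_h^S(\u(t),p(t))\in\V_h$ pointwise in time, so the natural candidate $\vec{\chi}=\pi_\tau R_h^S(\u,p)$ belongs to $X^w_{\tau}(\V_h)$ (the projection $\pi_\tau$ acts in time on values that already live in the fixed subspace $\V_h$). By the triangle inequality,
\[
\norm{\u-\vec{\chi}}_{L^2(\IOm)} \le \norm{\u-R_h^S(\u,p)}_{L^2(\IOm)} + \norm{(I-\pi_\tau)R_h^S(\u,p)}_{L^2(\IOm)},
\]
and the second term is controlled either by the $L^2(I;L^2)$-stability of $\pi_\tau$ applied to $R_h^S(\u,p)-\u$ plus $\norm{\u-\pi_\tau\u}_{L^2(\IOm)}$, or by \eqref{eq: approximation of pi_k} followed by stability of $R_h^S$. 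Either route reduces the $\vec{\chi}$-term to the two terms already handled.

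Combining the three estimates through Theorem \ref{chap:IS:theorem:L2_best_approximation} yields
\[
\norm{\u-\u_{\tau h}}_{L^2(\IOm)} \le C\bigl(\tau+h^2\bigr)\bigl(\norm{\f}_{L^2(I;L^2(\Omega))}+\norm{\u_0}_{\V^1}\bigr),
\]
which is the claim. The only mildly delicate step is ensuring $\pi_\tau R_h^S(\u,p)\in X^w_{\tau}(\V_h)$ and that $\pi_\tau$ is $L^2$-stable in time on each $I_m$; both are essentially bookkeeping once one uses that $\u$ is divergence-free and that $\pi_\tau$ is a continuous projection on each subinterval. Everything else is a direct combination of Theorem \ref{chap:IS:theorem:L2_best_approximation}, Theorem \ref{theorem:weak_with_preasure}, Corollary \ref{cor:Omega_convex_2}, Proposition \ref{prop: L2 estimate}, and \eqref{eq: approximation of pi_k}.
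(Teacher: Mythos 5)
Your proposal follows the paper's proof almost verbatim: apply Theorem \ref{chap:IS:theorem:L2_best_approximation}, bound the $\pi_\tau$-term by \eqref{eq: approximation of pi_k} together with Theorem \ref{theorem:weak_with_preasure}, bound the Ritz--Stokes term by Proposition \ref{prop: L2 estimate} together with Corollary \ref{cor:Omega_convex_2}, and build $\vec{\chi}$ as a time projection of $R_h^S(\u,p)$. The one place you deviate is the choice of that time projection, and this is exactly where the ``bookkeeping'' you defer is not innocent. You take $\vec{\chi}=\pi_\tau R_h^S(\u,p)$ and invoke ``$L^2$-stability of $\pi_\tau$ in time''. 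But $\pi_\tau$, as defined in \eqref{eq: projection pi_k}, interpolates the value at $t_m^-$, so it is not bounded on $L^2(I_m;L^2(\Omega))$; it is only defined (and stable) on functions with well-defined time traces. Applying it to $R_h^S(\u,p)$ therefore requires time continuity of $t\mapsto R_h^S(\u(t),p(t))$, which would need time regularity of the pressure $p$ that Theorem \ref{theorem:weak_with_preasure} does not supply ($p$ is only in $L^2(I;L^2_0(\Omega))$). Your alternative route via \eqref{eq: approximation of pi_k} runs into the same wall, since it produces $\norm{\partial_t R_h^S(\u,p)}$ and hence $\partial_t p$. The paper sidesteps all of this by taking $\vec{\chi}=P_\tau R_h^S(\u,p)$ with $P_\tau$ the \emph{orthogonal} $L^2$-projection in time onto $X^w_\tau$, which is $L^2$-stable and needs no point values; the splitting
\[
\norm{\u-\vec{\chi}}_{L^2(\IOm)} \le \norm{\u-P_\tau\u}_{L^2(\IOm)} + \norm{P_\tau(\u-R_h^S(\u,p))}_{L^2(\IOm)}
\]
then reduces to the two terms you already control. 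Replace $\pi_\tau$ by $P_\tau$ in the construction of $\vec{\chi}$ (keeping $\pi_\tau$ only where the theorem statement demands it, applied to the continuous function $\u$) and your argument is complete and coincides with the paper's.
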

\begin{proof}
	From Theorem \ref{chap:IS:theorem:L2_best_approximation}, we need to estimate three terms.	
 The temporal error is estimated by \eqref{eq: approximation of pi_k} resulting in
	\[
	\norm{\u - \pi_\tau \u}_{L^{2}(I;L^2(\Omega))} \le C \tau \norm{\partial_t \u}_{L^2(I;L^2(\Omega))}.
	\]
	The spatial error using Proposition \ref{prop: L2 estimate} is estimated by
	\[
	\norm{\u - R_h^S (\u,p)}_{L^2(I;L^2(\Omega))} \le C h^2 \left(\norm{\nabla^2 \u}_{L^2(I;L^2(\Omega))} + \norm{\nabla p}_{L^2(I;L^2(\Omega))} \right).
	\]
	Similarly, choosing $\vec{\chi}= P_\tau R_h^S (\u,p) $, where $P_\tau$ is the orthogonal $L^2$ projection onto $X^w_{\tau}$,   by the triangle inequality we obtain
	$$
	\begin{aligned}
	\norm{\u-\vec{\chi}}_{L^2(\IOm)}&\le \norm{\u-P_\tau\u}_{L^2(\IOm)}+\norm{P_\tau(\u-R_h^S (\u,p) }_{L^2(\IOm)}\\
	&\le    C \tau \norm{\partial_t \u}_{L^2(I;L^2(\Omega))}+C h^2 \norm{\nabla^2 \u}_{L^2(I;L^2(\Omega))}.
	\end{aligned}
	$$
	Using Corollary \ref{cor:Omega_convex_2}, we obtain the result. 
\end{proof}

%%%%%%%%%%%%%%%%%%%%%%%%%%%%%%%%%%%%%%%%%%%%%%%%%%%%%%%%%%%%%%%%%%%%%%%%%%%%%%%%%%%%

\subsection{$H^1$ error estimates} \label{subsec: H1 results}

The second result establishes the  optimal error estimates $L^2(I; H^1(\Om)^d)$ norm on quasi-uniform meshes. 

\begin{theorem}\label{thm: H1 best approximation}
Let $\f \in L^2(I;L^2(\Omega)^d)$  and $\u_0 \in \V^1$.
    Let $(\u,p)$ be the solution of \eqref{eq: weak stokes with pressure} and $(\u_{{\tau}h},p_{\tau h})$ solve the respective finite element problem \eqref{eq:spacetime_discretization} on a family of quasi-uniform triangulations $\{\Th\}$.
Then, for any $\vec{\chi}\in X^w_{\tau}(\V_h)$, there holds
    \begin{equation}
	\norm{\na (\u -\u_{{\tau}h})}_{L^2(\IOm)} \leq C \Big(\norm{\na(\u-\vec{\chi})}_{L^2(\IOm)} + \norm{\na(\u-\pi_\tau\u)}_{L^2(\IOm)}+ \norm{\na(\u - R_h^S \u)}_{L^2(\IOm)}\Big).
    \end{equation}
\end{theorem}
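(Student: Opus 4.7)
The plan is to mimic the duality argument of Theorem~\ref{chap:IS:theorem:L2_best_approximation}, with two changes: the dual right-hand side is taken to be $A_h\u_{\tau h}$ rather than $\u_{\tau h}$---so that the natural test $\v_{\tau h}=\u_{\tau h}$ produces $(A_h\u_{\tau h},\u_{\tau h})_{\IOm}=\|\nabla\u_{\tau h}\|^2_{L^2(\IOm)}$ on the left---and the gradient-level discrete maximal regularity estimate Theorem~\ref{thm: stability in for grad_Ah} replaces Theorem~\ref{thm: stability in for Ah}. Concretely, I define $(\g_{\tau h},\lambda_{\tau h})\in X^w_\tau(\X_h\times M_h)$ by
\[
B((\v_{\tau h},q_{\tau h}),(\g_{\tau h},\lambda_{\tau h}))=(A_h\u_{\tau h},\v_{\tau h})_{\IOm}\quad\forall (\v_{\tau h},q_{\tau h})\in X^w_\tau(\X_h\times M_h),
\]
test with $(\u_{\tau h},p_{\tau h})$, and apply Galerkin orthogonality (Proposition~\ref{prop:Galerkin_orthogonality}) to get $\|\nabla\u_{\tau h}\|^2_{L^2(\IOm)}=B((\u,p),(\g_{\tau h},\lambda_{\tau h}))$. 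I then expand the right-hand side via the dual representation~\eqref{eq:dualB} into $J_1+J_2+J_3+J_4$, with the convention $\g_{\tau h,M+1}^+=0$ so that the terminal boundary term fits the jump format.

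For $J_2+J_3=(\nabla\u,\nabla\g_{\tau h})_{\IOm}-(p,\nabla\cdot\g_{\tau h})_{\IOm}$ I reuse the $L^2$ argument verbatim: inserting the Stokes Ritz projection~\eqref{chap:IS:eq:stokes_projection} reduces this to $(\nabla R_h^S(\u,p),\nabla\g_{\tau h})_{\IOm}$ (the pressure pairing vanishes because $\g_{\tau h}$ is discretely divergence free and $R_h^{S,p}(\u,p)\in M_h$). Cauchy-Schwarz combined with the $L^2(I;H^1)$-stability part of Theorem~\ref{thm: stability in for grad_Ah} applied to the dual problem---using that $A_h\u_{\tau h}\in\V_h$, so $\mathbb{P}_hA_h\u_{\tau h}=A_h\u_{\tau h}$ and therefore $\nabla A_h^{-1}\mathbb{P}_hA_h\u_{\tau h}=\nabla\u_{\tau h}$---bounds this by $C\|\nabla R_h^S(\u,p)\|_{L^2(\IOm)}\|\nabla\u_{\tau h}\|_{L^2(\IOm)}$.

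The terms $J_1$ and $J_4$ require a new manipulation. The key identity, from the self-adjointness of $A_h$, is that for $v_h,w_h\in\V_h$ one has $(v_h,w_h)=(\nabla v_h,\nabla A_h^{-1}w_h)$. Since $\partial_t\g_{\tau h}$ and each jump $[\g_{\tau h}]_m$ lie in $\V_h$ pointwise, I rewrite
\[
(\u,\partial_t\g_{\tau h})=(\mathbb{P}_h\u,\partial_t\g_{\tau h})=(\nabla\mathbb{P}_h\u,\nabla A_h^{-1}\partial_t\g_{\tau h}),
\]
and analogously for the jump pairings after first replacing $\u_m^-$ by $(\pi_\tau\u)_m^-$ as in the $L^2$ proof. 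Cauchy-Schwarz together with Theorem~\ref{thm: stability in for grad_Ah}---which controls $\partial_t\nabla A_h^{-1}\g_{\tau h}$ in $L^2(\ImOm)$ and the $\tau_m^{-1}$-weighted jumps of $\nabla A_h^{-1}\g_{\tau h}$ by $\|\nabla\u_{\tau h}\|_{L^2(\IOm)}$---and Lemma~\ref{lem: stability Leray in H1} (giving $\|\nabla\mathbb{P}_h\u\|\le C\|\nabla\u\|$, which is where quasi-uniformity and convexity enter) produce $|J_1|\le C\|\nabla\u\|_{L^2(\IOm)}\|\nabla\u_{\tau h}\|_{L^2(\IOm)}$. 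For $J_4$ an additional inverse estimate in time applied to the piecewise polynomial function $\nabla\mathbb{P}_h\pi_\tau\u$ converts the $\tau_m^{1/2}$-weighted endpoint values into $L^2(\ImOm)$ norms, yielding $|J_4|\le C\|\nabla\pi_\tau\u\|_{L^2(\IOm)}\|\nabla\u_{\tau h}\|_{L^2(\IOm)}$.

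Summing the four bounds and dividing by $\|\nabla\u_{\tau h}\|_{L^2(\IOm)}$ gives
\[
\|\nabla\u_{\tau h}\|_{L^2(\IOm)}\le C\big(\|\nabla\u\|_{L^2(\IOm)}+\|\nabla\pi_\tau\u\|_{L^2(\IOm)}+\|\nabla R_h^S(\u,p)\|_{L^2(\IOm)}\big).
\]
The asserted best approximation form then follows from invariance of the Galerkin scheme on $X^w_\tau(\V_h)$: replacing $(\u,\u_{\tau h})$ by $(\u-\vec{\chi},\u_{\tau h}-\vec{\chi})$ for arbitrary $\vec{\chi}\in X^w_\tau(\V_h)$, using $\pi_\tau\vec{\chi}=\vec{\chi}$ and $R_h^S(\vec{\chi},0)=\vec{\chi}$ for $\vec{\chi}\in\V_h$, and a final triangle inequality. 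The main obstacle is the gradient-level treatment of $J_1$ and $J_4$: converting the $(\u,\partial_t\g_{\tau h})$ and jump pairings into a form amenable to Theorem~\ref{thm: stability in for grad_Ah} requires inserting the discrete Leray projection $\mathbb{P}_h$ onto the continuous factor, and it is precisely the $H^1$-stability of $\mathbb{P}_h$ from Lemma~\ref{lem: stability Leray in H1}---available only on quasi-uniform meshes in convex domains---that dictates the hypotheses of the theorem.
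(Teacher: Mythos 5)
Your proposal is correct and follows essentially the same route as the paper's own proof: the dual problem with right-hand side $A_h\u_{\tau h}$, the splitting into $J_1+J_2+J_3+J_4$ via \eqref{eq:dualB}, the insertion of $\mathbb{P}_h$ and the rewriting $(\mathbb{P}_h\u,\partial_t\g_{\tau h})=(\nabla\mathbb{P}_h\u,\nabla A_h^{-1}\partial_t\g_{\tau h})$ combined with Theorem \ref{thm: stability in for grad_Ah} and Lemma \ref{lem: stability Leray in H1} for $J_1$ and $J_4$, the Stokes Ritz projection for $J_2+J_3$, and the final Galerkin-invariance plus triangle-inequality step are all exactly the paper's argument.
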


\begin{proof}
	The proof is similar to the proof for the $L^2(I;L^2(\Om)^d)$ norm. This time, we consider the following dual  problem
    \begin{equation}
	\begin{aligned}
	    -\partial_t \g(t,\x) - \Delta \g(t,\x) + \nabla \lambda(t,\x) &=A_h\u_{{\tau}h} , \qquad & (t,\x) \in I\times \Omega,\\
	    \nabla \cdot \g(t,\x) &= 0, \qquad &(t,\x) \in I\times \Omega,\\
	    \g(t,\x) &= 0, \qquad &(t,\x) \in I\times \partial\Omega,\\
	    \g(T,x) &= 0, \qquad & \x \in \Omega.
	\end{aligned}
    \end{equation}
     The corresponding finite element approximation $(\g_{{\tau}h}, \lambda_{{\tau}h}) \in X^w_{\tau}(\X_h\times M_h)$ is given by
    \begin{equation}
    B((\v_{{\tau}h},q_{{\tau}h}),(\g_{{\tau}h},\lambda_{{\tau}h})) = \left(A_h\u_{\tau h} \,\v_{{\tau}h}\right)_{I \times \Omega} \qquad \forall(\v_{{\tau}h},q_{{\tau}h})\in X^w_{\tau}(\X_h\times M_h).
    \end{equation}
By the Galerkin orthogonality from \eqref{prop:Galerkin_orthogonality} we have
$$
    \begin{aligned}
	\|\na \u_{\tau h}&\|^2_{L^2(\IOm)} 
	= (A_h\u_{{\tau}h}, \u_{{\tau}h})_{\IOm}
			      = B((\u_{{\tau}h},p_{{\tau}h}),(\g_{{\tau}h},\lambda_{{\tau}h})) = B((\u,p),(\g_{{\tau}h},\lambda_{{\tau}h})) \\
			      &= -\sum_{m=1}^M(\u,\partial_t\g_{{\tau}h})_{I_m \times \Omega} + (\nabla \u, \nabla \g_{{\tau}h})_{I\times \Omega} - (p, \nabla \cdot \g_{{\tau}h}) - \sum_{m=1}^M(\u_m^-,[\g_{{\tau}h}]_m)_{\Omega}\\
			      &= J_1 + J_2 + J_3 + J_4,
    \end{aligned}
$$
    where we have used the dual representation of the bilinear form $B$ from \eqref{eq:dualB}.
    In the last sum we set $\g_{{\tau}h,M+1}=0$ so that $[\g_{{\tau}h}]_M= - \g_{{\tau}h,M}$.   
    Applying the Cauchy-Schwarz inequality and Theorem \ref{thm: stability in for grad_Ah}, we obtain   
$$
\begin{aligned}
	J_1 & =  -\sum_{m=1}^M(\mathbb{P}_h\u,\partial_t\g_{{\tau}h})_{I_m \times \Omega}\\
	&=-\sum_{m=1}^M(\na \mathbb{P}_h\u,\na A_h^{-1}\partial_t\g_{{\tau}h})_{I_m \times \Omega}=\\
	&\le \sum_{m=1}^M \norm{\na \mathbb{P}_h\u}_{L^2(\ImOm)} \norm{\na A_h^{-1}\partial_t \g_{{\tau}h}}_{L^2(\ImOm)}\\
	&\le \left(\sum_{m=1}^M\norm{\na \mathbb{P}_h\u}^2_{L^2(\ImOm)} \right)^{1/2}\left(\sum_{m=1}^M \norm{\na A_h^{-1}\partial_t \g_{{\tau}h}}^2_{L^2(\ImOm)}\right)^{1/2}\\
	&\le C\norm{\na \u}_{L^2(\IOm)} \norm{\na\u_{\tau h}}_{L^2(\IOm)},
	\end{aligned}
$$
where in the last step we used the stability of $\mathbb{P}_h$ in $H^1(\Om)^d$ for divergence free functions from $H^1_0(\Om)^d$. 

To treat $J_4$, we use the projection defined in \eqref{eq: projection pi_k}, the Cauchy-Schwarz inequality and the inverse inequality for time discrete function and  to obtain
$$
\begin{aligned} 
	J_4 &=-\sum_{m=1}^M(\u_m^-,[\g_{{\tau}h}]_m)_{\Omega}=-\sum_{m=1}^M(\pi_\tau\mathbb{P}_h\u_m,[\g_{{\tau}h}]_m)_{\Omega} \\
	&=-\sum_{m=1}^M(\pi_\tau\na\mathbb{P}_h\u_m,[\na A_h^{-1}\g_{{\tau}h}]_m)_{\Omega}\\
	&\le \left(\sum_{m=1}^{M} \tau_m\|\pi_\tau\na\mathbb{P}_h\u_m\|^2_{L^2(\Om)}\right)^{1/2}\left(\sum_{m=1}^{M} \tau^{-1}_m\|[\na A_h^{-1}\g_{{\tau}h}]_{m-1}\|^2_{L^2(\Om)}\right)^{1/2}\\
	&\le \left(\sum_{m=1}^{M} \tau_m\|\pi_\tau\na\mathbb{P}_h\u\|^2_{L^\infty(I_m;L^2(\Om))}\right)^{1/2}\left(\sum_{m=1}^{M} \tau_m\|\tau_m^{-1}[\na A_h^{-1}\g_{{\tau}h}]_{m-1}\|^2_{L^2(\Om)}\right)^{1/2}\\
	&\le C\left(\sum_{m=1}^{M} \|\pi_\tau\na \mathbb{P}_h\u\|^2_{L^2(\ImOm)}\right)^{1/2}\left(\sum_{m=1}^{M} \tau_m\|\tau_m^{-1}[\na A_h^{-1}\g_{{\tau}h}]_{m-1}\|^2_{L^2(\Om)}\right)^{1/2}\\
	& \le C\norm{\pi_\tau\na \mathbb{P}_h\u}_{L^2(\IOm)} \norm{\na\u_{\tau h}}_{L^2(\IOm)}\\
	& \le C\norm{\pi_\tau\na \u}_{L^2(\IOm)} \norm{\na\u_{\tau h}}_{L^2(\IOm)},
\end{aligned} 
$$
where again in the last step we used the stability of $\mathbb{P}_h$ in $H^1(\Om)^d$ for divergence free functions from $H^1_0(\Om)^d$. 

    For $J_2+J_3$ we can argue by using the projection $R_h^S$ defined in \eqref{chap:IS:eq:stokes_projection}.
    Then since $\g_{{\tau}h}$ is discretely divergence-free, we have
    $$
    \begin{aligned}
	J_2 + J_3 &=(\nabla \u, \nabla \g_{{\tau}h})_{I\times \Omega} - (p,\nabla \cdot \g_{{\tau}h})_{I\times\Omega}\\
		  &=(\nabla R_h^S(\u,p), \nabla \g_{{\tau}h})_{I\times \Omega} - (R_h^{S,p}(\u,p),\nabla \cdot \g_{{\tau}h})_{I\times\Omega}
		  =(\nabla R_h^S(\u,p), \nabla \g_{{\tau}h})_{I\times \Omega}\\
		  &\le  \norm{\na R_h^S (\u,p)}_{L^2(\IOm)}\norm{\na\g_{{\tau}h}}_{L^2(\IOm)}\\
		\le & C\left(\norm{\na\u}_{L^2(\IOm))} + \norm{\na(\u - R_h^S (\u,p))}_{L^2(\IOm)}\right)\norm{\na\u_{\tau h}}_{L^2(\IOm)}.
    \end{aligned}
      $$
       Combining the estimates for $J_1$, $J_2$, $J_3$ and $J_4$, we conclude    
  $$
	\norm{\na \u_{\tau h}}_{L^2(\IOm)}
	\le C \Big(\norm{\na\u}_{L^2(\IOm))} + \norm{\pi_\tau\na\u}_{L^2(\IOm))}+  \norm{\na(\u - R_h^S(\u,p))}_{L^2(\IOm)}\Big).
$$
Using  that the Galerkin method is invariant on $X^w_{\tau}(\V_h\times M_h)$, by replacing $\u$ and $\u_{\tau h}$ with $\u-\vec{\chi}$ and $\u_{\tau h}-\vec{\chi}$ for any $\vec{\chi}\in X^w_{\tau}(\V_h)$, and using the triangle inequality  we complete the proof of the theorem.
\end{proof}

\begin{corollary}\label{cor: optimal error estimate in H1}
Let $\Omega$ be convex, $\f \in L^2(I,L^2(\Omega)^d)$ and $\u_0 \in \V^1$. Let $(\u,p)$ be the solution of \eqref{eq: weak stokes with pressure} and $(\u_{{\tau}h},p_{\tau h})$ solve the respective finite element problem \eqref{eq:spacetime_discretization}. Then, there holds
    \begin{equation}
\norm{\na(\u-\u_{{\tau}h)}}_{L^{2}(I;L^2(\Omega))}
\leq C \left({\tau}^{1/2} + h\right) \left( \norm{\f}_{L^{2}(I;L^2(\Omega))} + \norm{\u_0}_{\V^1}\right).
\end{equation}
\end{corollary}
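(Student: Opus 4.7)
The plan is to derive the bound from Theorem~\ref{thm: H1 best approximation} by making a concrete choice of $\vec{\chi}$ and estimating each of the three terms on the right hand side. I choose $\vec{\chi} = P_\tau R_h^S(\u,p) \in X^w_{\tau}(\V_h)$, where $P_\tau$ is the orthogonal $L^2$-projection in time onto $X^w_\tau$; this choice is admissible because $R_h^S(\u(t),p(t)) \in \V_h$ pointwise in time by Remark~\ref{remark:RitzStokesProjection}. The goal is then to show that each of the three error contributions in Theorem~\ref{thm: H1 best approximation} is of order $\tau^{1/2} + h$ times the data norm.

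The purely spatial term is straightforward: Proposition~\ref{Stability_Stokes} applied pointwise in time together with Corollary~\ref{cor:Omega_convex_2} gives
$$\norm{\na(\u - R_h^S(\u,p))}_{L^2(\IOm)} \le C h \bigl(\norm{\na^2 \u}_{L^2(\IOm)} + \norm{\na p}_{L^2(\IOm)}\bigr) \le C h \bigl(\norm{\f}_{L^2(I;L^2(\Omega))} + \norm{\u_0}_{\V^1}\bigr).$$

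The main obstacle is the $\tau^{1/2}$ bound for the temporal projection term $\norm{\na(\u-\pi_\tau \u)}_{L^2(\IOm)}$. Since $\pi_\tau$ acts only in time, it commutes with $\na$, so this equals $\norm{\na\u-\pi_\tau \na\u}_{L^2(\IOm)}$. A naive application of the first-order bound \eqref{eq: approximation of pi_k} would require $\na \partial_t \u \in L^2(\IOm)$, which does not follow from the data assumptions. My plan is to compensate the missing half-derivative in time by using the extra spatial regularity: from Theorem~\ref{theorem:weak_with_preasure} and Corollary~\ref{cor:Omega_convex_2} one has $\u \in L^2(I;H^2(\Omega)^d) \cap H^1(I;L^2(\Omega)^d)$, and the Lions-Magenes intermediate-derivative embedding then yields $\u \in H^{1/2}(I;H^1(\Omega)^d)$ with $\norm{\u}_{H^{1/2}(I;H^1(\Omega))} \le C(\norm{\f}_{L^2(I;L^2(\Omega))} + \norm{\u_0}_{\V^1})$. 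Interpolating the first-order estimate \eqref{eq: approximation of pi_k} with the trivial $L^2$-stability bound for $\pi_\tau$ produces the fractional approximation property $\norm{w-\pi_\tau w}_{L^2(I;X)} \le C\tau^{1/2}\norm{w}_{H^{1/2}(I;X)}$ (applicable to $w=\na\u$ since $\u\in C(\bar I; H^1)$ ensures the point values needed to define $\pi_\tau \na\u$ are well-defined), which gives the desired $O(\tau^{1/2})$ rate.

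Finally, for the best-approximation term, I split
$$\norm{\na(\u-P_\tau R_h^S(\u,p))}_{L^2(\IOm)} \le \norm{\na\u - P_\tau \na\u}_{L^2(\IOm)} + \norm{P_\tau \na(\u - R_h^S(\u,p))}_{L^2(\IOm)},$$
using that $P_\tau$ commutes with $\na$. The first summand is controlled by the very same $H^{1/2}(I;H^1)$ interpolation argument used in the previous step, producing $O(\tau^{1/2})$, while the second summand is bounded by $\norm{\na(\u - R_h^S(\u,p))}_{L^2(\IOm)} = O(h)$ via $L^2$-stability of $P_\tau$. Combining the three estimates and invoking Corollary~\ref{cor:Omega_convex_2} once more to absorb all right-hand sides into $\norm{\f}_{L^2(I;L^2(\Omega))} + \norm{\u_0}_{\V^1}$ yields the claimed estimate.
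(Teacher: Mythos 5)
Your overall strategy coincides with the paper's: the proof is run exactly as in Corollary \ref{cor: optimal error estimate in L2} with the choice $\vec{\chi}=P_\tau R_h^S(\u,p)$, the spatial term is handled by Proposition \ref{Stability_Stokes} together with Corollary \ref{cor:Omega_convex_2}, and the only genuinely new ingredient is the temporal bound $\norm{\na(\u-\pi_\tau\u)}_{L^2(\IOm)}\le C\tau^{1/2}\big(\norm{\f}_{L^2(I;L^2(\Omega))}+\norm{\u_0}_{\V^1}\big)$. The paper imports this estimate directly from Lemma~3.13 of \cite{KunischK_PieperK_VexlerB_2014}; you attempt to prove it, and this is where your argument has a genuine gap. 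The projection $\pi_\tau$ defined in \eqref{eq: projection pi_k} is \emph{not} $L^2(I;X)$-stable: it interpolates the point value $v(t_m^-)$, which is not controlled by $\norm{v}_{L^2(I_m;X)}$. Hence there is no ``trivial $L^2$-stability bound'' to interpolate against \eqref{eq: approximation of pi_k}, and the resulting fractional approximation property $\norm{w-\pi_\tau w}_{L^2(I;X)}\le C\tau^{1/2}\norm{w}_{H^{1/2}(I;X)}$ is in fact false for general continuous $w$: since $H^{1/2}(I)$ does not embed into $L^{\infty}(I)$, one can keep $\norm{w}_{H^{1/2}(I;X)}$ bounded while making $\norm{w(t_m^-)}_X$, and therefore $\norm{\pi_\tau w}_{L^2(I_m;X)}$, arbitrarily large (a polynomial of fixed degree with a prescribed large endpoint value has large $L^2(I_m)$ norm). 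Observing that $\u\in C(\bar I;H^1)$ only makes $\pi_\tau\na\u$ well defined; it does not recover the quantitative $\tau^{1/2}$ rate. A correct argument must use the full regularity $\u\in L^2(I;H^2(\Omega)^d)\cap H^1(I;L^2(\Omega)^d)$ in a structured way, e.g.\ via the spatial interpolation $\norm{\na w}_{L^2(\Omega)}^2\le \norm{w}_{L^2(\Omega)}\norm{A w}_{L^2(\Omega)}$ combined with the first-order temporal estimate for one factor, which is essentially the content of the cited lemma.

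The remaining pieces of your proof are sound and agree with the paper: the interpolation argument \emph{is} legitimate for the orthogonal projection $P_\tau$, which is $L^2(I)$-stable and first-order accurate, so $\norm{w-P_\tau w}_{L^2(I;X)}\le C\tau^{1/2}\norm{w}_{H^{1/2}(I;X)}$ does hold; the intermediate-derivative embedding $L^2(I;H^2)\cap H^1(I;L^2)\hookrightarrow H^{1/2}(I;H^1)$ is correct; and the spatial and best-approximation terms are treated exactly as in the proof of Corollary \ref{cor: optimal error estimate in L2}. The gap is therefore localized to the single estimate for $\na(\u-\pi_\tau\u)$, which needs either the citation the paper uses or a proof along the lines indicated above.
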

\begin{proof}
The proof is analogous to the proof of Corollary \ref{cor: optimal error estimate in L2}. The main difference is to use the estimate 
$$
\norm{\na(\u-\pi_\tau\u)}_{L^2(\IOm)}\le C\tau^{1/2}\left( \norm{\f}_{L^{2}(I;L^2(\Omega))} + \norm{\u_0}_{\V^1}\right).
$$
from Lemma~3.13 from \cite{KunischK_PieperK_VexlerB_2014}
\end{proof}

%%%%%%%%%%%%%%%%%%%%%%%%%%%%%%%%%%%%%%%%%%%%%%%%%%%%%%%%%%%%%%%%%%%%%%%%%%%%%%%%%%%%%%%%%%%%%%%%%%%%%%%%%%%%%%
\section{Inhomogeneous heat equation} \label{sec: parabolic}

The above results naturally hold for the scalar parabolic equation 
\begin{equation}\label{eq: heat equation}
\begin{aligned}
u_t(t,x)-\Delta u(t,x) &= f(t,x), & (t,x) &\in \IOm,\;  \\
    u(t,x) &= 0,    & (t,x) &\in I\times\pa\Omega, \\
   u(0,x) &= u_0(x),    & x &\in \Omega.
\end{aligned}
\end{equation}
All arguments stay almost unchanged.
We only need to replace $A$ and $A_h$ with $-\De$ and $-\De_h$, the discrete Leray projection $\mathbb{P}_h$ with the $L^2$-projection ${P}_h$, and the Stokes projection $R_h^S(\u,p)$ with the Ritz (elliptic) projection $R_h$.   The mesh restrictions in the case of $H^1$ norm estimates can be relaxed. The only technical requirement is the stability of the $L^2$-projection in $H^1$ norm, i.e.
$$
\|\na P_h u\|_{L^2(\Om)}\le C\|\na u\|_{L^2(\Om)}.
$$
Such result is shown for locally quasi-uniform meshes \cite{BrambleJH_PasciakJE_SteinbachO_2002a}, on adaptive meshes obtained by bisection method in 2D  \cite{BankRE_YserentantH_2014, GaspozFD_HeineCJ_SiebertKG_2016}.

In this situation the best approximation result take the form
\begin{equation}\label{eq: optimal error L2 heat}
\|u-u_{kh}\|_{L^2(I\times \Om)}\le C\left(\|u-\chi\|_{L^2(I\times\Om)}+\|R_hu-u\|_{L^2(I\times\Om)}+\|\pi_ku-u\|_{L^2(I\times\Om)}\right),
\end{equation}
and since the Ritz projection $R_h$ is stable in $H^1$ norm,
\begin{equation}\label{eq: optimal error H1 heat}
\|\na(u-u_{kh})\|_{L^2(I\times \Om)}\le C\left(\|\na(u-\chi)\|_{L^2(I\times\Om)}+\|\na(\pi_ku-u)\|_{L^2(I\times\Om)}\right),
\end{equation}
 for any space-time fully discrete function $\chi$. Similarly, to the Stokes problem, under the additional assumption of convexity of $\Omega$ and some approximation properties of the discrete spaces,
we easily derive optimal error estimates of the form
\begin{equation}\label{eq: error in terms of data heat}
\norm{u-u_{{\tau}h}}_{L^{2}(I;H^s(\Omega))}
\leq C \left({\tau}^{1-s/2} + h^{2-s}\right) \left( \norm{f}_{L^{2}(I;L^2(\Omega))} + \norm{u_0}_{H^1(\Om)}\right),\quad s=0,1.
\end{equation}%%%%%%%%%%%%%%%%%%%%%%%%%%%%%%%%%%%%%%%%%%%%%%%%%%%%%%%%%%%%%%%%%%%%%%%%%%%%%%%%%%%%%%%%%%%%%%%%%%%%%%%

%%%%%%%%%%%%%%%%%%%%%%%%%%%%%%%%%%%%%%%%%%%%%%%%%%%%%%%%%%%%%%%%%%%%%%%%%%%%%%%
\bibliographystyle{siam}
\bibliography{sources}

\end{document}